\documentclass[12pt,reqno]{amsart}

\usepackage{amssymb}

\usepackage{hyperref}

\numberwithin{equation}{section}

\newtheorem{theorem}{Theorem}[section]
\newtheorem{proposition}[theorem]{Proposition}
\newtheorem{lemma}[theorem]{Lemma}
\newtheorem{corollary}[theorem]{Corollary}

\theoremstyle{definition}
\newtheorem{definition}[theorem]{Definition}
\newtheorem{remark}[theorem]{Remark}

\begin{document}

\baselineskip=15pt

\title[Criterion for Parabolic Lie algebroid connections]{A criterion for parabolic vector
bundles to admit a parabolic Lie algebroid connection}

\author[D. Alfaya]{David Alfaya}

\address{Department of Applied Mathematics and Institute for Research in Technology, ICAI 
School of Engineering, Comillas Pontifical University, C/Alberto Aguilera 25, 28015 Madrid, 
Spain}

\email{dalfaya@comillas.edu}

\author[A. Bansal]{Ashima Bansal}

\address{Department of Mathematics, Shiv Nadar University, NH91, Tehsil
Dadri, Greater Noida, Uttar Pradesh 201314, India}

\email{ashima.bansal@snu.edu.in}

\author[I. Biswas]{Indranil Biswas}

\address{Department of Mathematics, Shiv Nadar University, NH91, Tehsil Dadri,
Greater Noida, Uttar Pradesh 201314, India}

\email{indranil.biswas@snu.edu.in, indranil29@gmail.com}

\author[A. Singh]{Anoop Singh}

\address{Department of Mathematical Sciences, Indian Institute of Technology (BHU), Varanasi 
221005, India}

\email{anoopsingh.mat@iitbhu.ac.in}

\subjclass[2010]{14H60, 53B15, 70G45}

\keywords{Parabolic bundle, logarithmically split Lie algebroid, Lie algebroid connection}

\date{}

\begin{abstract}
Given a holomorphic Lie algebroid $(V,\, \phi)$ on a compact connected Riemann surface $X$,
we give a necessary and sufficient condition for a parabolic vector bundle on $X$, with parabolic
structure over a nonzero reduced effective divisor, to admit a parabolic Lie algebroid connection for
the Lie algebroid $(V,\, \phi)$. 
\end{abstract}

\maketitle

\section{Introduction}

Let $X$ be a compact connected Riemann surface. A holomorphic Lie algebroid on $X$ is a pair
of the form $(V,\, \phi)$, where $V$ is a holomorphic vector bundle over $X$ whose sheaf of
holomorphic sections is equipped with a structure of a complex Lie algebra and $\phi\, :\, V\,
\longrightarrow\, TX$ is a holomorphic homomorphism of vector bundles such that
$$[s,\, f\cdot t]\ =\ f\cdot [s,\, t]+\phi(s)(f)\cdot t$$ for all locally defined holomorphic sections
$s,\, t$ of $V$ and all locally defined holomorphic functions $f$ on $X$.

Fix a holomorphic Lie algebroid $(V,\, \phi)$ on $X$, and take any holomorphic vector bundle $E$ on $X$.
A holomorphic Lie algebroid connection on $E$ is a holomorphic differential operator
$$
D\,\,:\,\, E\,\,\longrightarrow\, \, E\otimes V^*
$$
satisfying the Leibniz identity which says that $D(fs) \,=\, fD(s) + s\otimes \phi^*(df)$
for all locally defined holomorphic sections $s$ of $E$ and all locally defined holomorphic
functions $f$ on $X$, where $\phi^*$ is the dual of the above homomorphism $\phi$ (See \cite{AO},
\cite{Al}, \cite{To1}, \cite{To2}, \cite{CM}, \cite{ELW}, \cite{LM}, \cite{PW} for Lie algebroids
and Lie algebroid connections).

Fix a nonzero reduced effective divisor $S$ on $X$. Let $E_*$ be a parabolic vector bundle on $X$ with parabolic 
structure over the divisor $S$. A quasi-parabolic Lie algebroid connection on $E_*$ for $(V,\, \phi)$ is a 
holomorphic Lie algebroid connection $D$ on the holomorphic vector bundle $E$ underlying $E_*$ for $(V,\, 
\phi)$ such that $D$ induces a holomorphic Lie algebroid connection (for $(V,\, \phi)$) on each of the 
subsheaves of $E$ given by the quasi-parabolic filtrations of $E_*$. A parabolic Lie algebroid connection on 
$E_*$ is a quasi-parabolic Lie algebroid connection $D$ on $E_*$ such that the second fundamental forms of $D$, 
for the subsheaves of $E$ given by the quasi-parabolic filtrations of $E_*$, is induced by multiplication by 
the parabolic weights.

In \cite{ABKS}, the following necessary condition for $E_*$ to admit a parabolic Lie algebroid connection
for $(V,\,\phi)$ was obtained: If $E_*$ admits a parabolic Lie algebroid connection
for the $(V,\,\phi)$, then the homomorphism $\phi$ vanishes on every parabolic point of $E_*$, namely the
points of the divisor $S$.

Assume that $\phi$ vanishes on every point of the parabolic divisor $S$. Let
$$
\phi_0\ :\ V \ \longrightarrow\ TX\otimes {\mathcal O}_X(-S)
$$
be the homomorphism given by $\phi$. The holomorphic Lie algebroid $(V,\, \phi)$ is called
logarithmically split if there is a holomorphic homomorphism of vector bundles
$$
\sigma\ :\ TX\otimes{\mathcal O}_X(-S)\ \longrightarrow\ V
$$
such that $\phi_0\circ\sigma\,=\, {\rm Id}_{TX\otimes{\mathcal O}_X(-S)}$. Notice that $\sigma$ is not required
to preserve the Lie bracket structures of $TX\otimes{\mathcal O}_X(-S)$ and $V$; it is just a holomorphic
vector bundle homomorphism.

The holomorphic Lie algebroid $(V,\, \phi)$ is called
logarithmically non-split if it is not logarithmically split.

We prove the following criterion for the existence of a parabolic Lie algebroid
connection (see Corollary \ref{cor2}):

\begin{theorem}\label{cori}
Let $(V,\, \phi)$ be a holomorphic Lie algebroid on $X$ satisfying the condition 
that $\phi$ vanishes over every parabolic point $x\, \in\, S$.
A parabolic vector bundle $E_*$ on $X$ admits a parabolic Lie algebroid connection
for $(V,\, \phi)$ if and only if at least one of the following two conditions holds:
\begin{enumerate}
\item The holomorphic Lie algebroid $(V,\, \phi)$ is logarithmically non-split.

\item The parabolic degree of every indecomposable component of $E_*$ is zero.
\end{enumerate}
\end{theorem}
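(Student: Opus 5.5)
The plan is an Atiyah-bundle and extension-class argument: identify the obstruction to a parabolic Lie algebroid connection as a cohomology class, and show it vanishes in exactly the two cases of the theorem. Write $\mathcal{O}_X$-linear endomorphisms preserving the quasi-parabolic flags as the sheaf ${\rm End}_P(E_*)$, and the strongly parabolic ones (nilpotent on the flags) as ${\rm End}^n_P(E_*)$. Parabolic Serre duality gives ${\rm End}_P(E_*)^*\otimes K_X(S)\cong {\rm End}^n_P(E_*)\otimes K_X$.

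\textbf{Step 1: the parabolic Atiyah sequence and its pullback.} Use the parabolic Atiyah sequence
$$0\to {\rm End}_P(E_*)\to {\rm At}_P(E_*)\to TX\otimes\mathcal{O}_X(-S)\to 0,$$
whose splittings correspond to logarithmic connections preserving the flags with prescribed residues. Up to the weight normalization this is the classical fact that parabolic connections are logarithmic connections with residues given by the weights. Since $\phi$ vanishes on $S$, $\phi$ factors through $\phi_0$. A parabolic Lie algebroid connection for $(V,\phi)$ is then a splitting of the pullback of this sequence along $\phi_0:V\to TX(-S)$, i.e.\ a homomorphism $h:V\to{\rm At}_P(E_*)$ lifting $\phi_0$. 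Any Lie-bracket compatibility is automatic for connections of this first-order type on a curve. The obstruction is therefore $\phi_0^*(\mathrm{at}_P(E_*))\in H^1(X,{\rm End}_P(E_*)\otimes V^*)$, where $\mathrm{at}_P(E_*)\in H^1(X,{\rm End}_P(E_*)\otimes K_X(S))$ is the parabolic Atiyah class.

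\textbf{Step 2: the split case.} If $\sigma$ exists, then $\phi_0^*$ is injective on $H^1$, because $\sigma^*\phi_0^*={\rm Id}$. Hence a connection exists iff $\mathrm{at}_P(E_*)=0$, i.e.\ iff $E_*$ admits a parabolic (logarithmic) holomorphic connection. Invoke the parabolic Atiyah--Weil criterion, known from earlier work of Biswas and presumably recalled in the paper: this holds iff every indecomposable component of $E_*$ has parabolic degree zero.

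\textbf{Step 3: the non-split case, which is the main obstacle.} Here we must show $\phi_0^*(\mathrm{at}_P)=0$ for every $E_*$. By Serre duality, $\phi_0^*(\mathrm{at}_P)$ vanishes iff it pairs to zero with every $\theta\in H^0(X,{\rm End}_P(E_*)^*\otimes V\otimes K_X)$. That pairing equals the pairing of $\mathrm{at}_P$ with $(\mathrm{Id}\otimes\phi_0)(\theta)\in H^0({\rm End}^n_P(E_*)\otimes K_X)\cdot$(twist), via the duality above.

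First I would show that $(\mathrm{Id}\otimes\phi_0)(\theta)$ lies in the part of $H^0$ where the pairing reduces to a trace. Then I would exploit non-splitness: the extension $0\to\ker\phi_0\to V\to TX(-S)\to 0$ does not split. Equivalently, its class in $H^1(\ker\phi_0\otimes K_X(S))$ is nonzero. By Serre duality this means the induced map $H^0(V^*\otimes \ldots)$ fails to hit certain classes. Decompose $\theta$ into its trace part and its nilpotent part using a Jordan-type decomposition on each indecomposable summand. I would then aim to prove that $\phi_0$ kills $H^0(V\otimes K_X)$-type sections in $H^0(\mathcal{O}_X)$. Concretely, the composite $H^0(V\otimes K_X)\to H^0(\mathcal{O}_X(-S))=0$ vanishes, and the nilpotent pieces pair trivially with $\mathrm{at}_P$ via the standard identity that the pairing of the Atiyah class with a nilpotent flag-preserving endomorphism is zero. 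This yields the vanishing and hence the existence claim.

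I expect the delicate point to be making the reduction to trace parts precise, since $V$ may have higher rank. I would first try reducing to the case where $E_*$ is indecomposable, using additivity of the Atiyah class over direct sums.

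Finally, combining Steps 2 and 3 gives the equivalence: the non-split case always admits a connection, and the split case admits one exactly when every indecomposable component has parabolic degree zero.
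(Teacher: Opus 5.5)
There is a genuine gap. It is located in Step 1 and propagates into Step 3. An extension $0\to{\rm End}_P(E_*)\to{\rm At}_P(E_*)\to TX\otimes\mathcal{O}_X(-S)\to 0$ with kernel ${\rm End}_P(E_*)$ has as its splittings the logarithmic connections whose residues merely preserve the flags $E^i_x$. Nothing forces the induced action on $E^i_x/E^{i+1}_x$ to be multiplication by $\alpha^x_i$, so your class $\phi_0^*(\mathrm{at}_P)\in H^1(X,{\rm End}_P(E_*)\otimes V^*)$ is the obstruction to a \emph{quasi}-parabolic connection, not a parabolic one.

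To fix the residues, the kernel must be the strongly parabolic sheaf ${\rm End}_n(E_*)$. The paper uses $0\to{\rm End}_n(E_*)\otimes V^*\to\mathcal{C}_{E_*,V}\to\mathcal{O}_X\to 0$, with obstruction $\zeta=({\rm Id}\otimes\phi_0^*)_*\zeta_0\in H^1(X,{\rm End}_n(E_*)\otimes V^*)$. The relevant duality is ${\rm End}_n(E_*)^*\cong{\rm End}_P(E_*)\otimes\mathcal{O}_X(S)$; your displayed identity is also off by a twist.

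Your own Step 3 exposes the problem. The trace part lands in $H^0(X,\mathcal{O}_X(-S))=0$ without any use of non-splitness. The identical argument for $(TX\otimes\mathcal{O}_X(-S),\iota)$ would then give every $E_*$ a parabolic connection, contradicting Biswas--Logares and your own Step 2. Step 2 itself, via $\sigma^*\phi_0^*={\rm Id}$ on $H^1$, is fine once the kernel is corrected.

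With the correct sequence, $\zeta$ is a functional on $H^0(X,{\rm End}_P(E_*)\otimes V\otimes K_X\otimes\mathcal{O}_X(S))$, and two points are missing.

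First, do not Jordan-decompose $\theta$, which is not an endomorphism when ${\rm rank}\,V>1$. Instead write $\zeta(\theta)=\zeta_0(\phi'_*\theta)$, where $\phi'_*\theta\in H^0(X,{\rm End}_P(E_*))$ is a genuine global parabolic endomorphism. For indecomposable $E_*$, this equals $\lambda\cdot{\rm Id}_E$ plus a nilpotent. The nilpotent part pairs to zero with $\zeta_0$: $\zeta_0$ comes from the Atiyah sequence of connections preserving the saturated kernel filtration of that nilpotent, and the trace pairing of a filtration-lowering endomorphism with filtration-preserving ones vanishes. Hence $\zeta_0=\beta\cdot{\rm trace}$. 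This is the "delicate point" you flagged, and this is how it is resolved.

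Second, trace commutes with contraction by $\phi_0$. So $\zeta$ factors through ${\rm trace}(\theta)\in H^0(X,V\otimes K_X\otimes\mathcal{O}_X(S))={\rm Hom}(TX\otimes\mathcal{O}_X(-S),V)$, followed by $\varpi\mapsto\phi_0\circ\varpi\in H^0(X,\mathcal{O}_X)=\mathbb{C}$. Any $\varpi$ with $\phi_0\circ\varpi=c\cdot{\rm Id}$ and $c\neq 0$ rescales to a splitting $\sigma$. This is exactly where logarithmic non-splitness enters. Your reformulation via the class of $0\to\ker\phi_0\to V\to TX\otimes\mathcal{O}_X(-S)\to 0$ does not capture it in general, since $\phi_0$ need not be surjective.

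Finally, pass from indecomposable components to $E_*$ by taking direct sums.
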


In particular, we prove the following (see Corollary \ref{cor1}):

\begin{theorem}\label{thmi}
Let $(V,\, \phi)$ be a holomorphic Lie algebroid on $X$ satisfying the condition that $\phi$
vanishes over every parabolic point $x\, \in\, S$, and $(V,\,\phi)$ is logarithmically
non-split. Then any parabolic vector bundle $E_*$ on $X$ with parabolic divisor $S$
admits a parabolic Lie algebroid connection for $(V,\, \phi)$.
\end{theorem}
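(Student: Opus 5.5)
The plan is an Atiyah-type obstruction argument, following the known criterion for ordinary parabolic connections, with the logarithmic Atiyah bundle pulled back along $\phi_0$.

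\emph{Step 1: the obstruction class.} Let ${\rm At}(E_*)$ denote the parabolic Atiyah bundle of $E_*$: the sheaf of first-order differential operators on $E$ that preserve the quasi-parabolic filtrations and whose symbol lies in $TX\otimes{\mathcal O}_X(-S)$. It sits in an exact sequence
$$0\longrightarrow {\rm End}(E_*)\longrightarrow {\rm At}(E_*)\longrightarrow TX\otimes{\mathcal O}_X(-S)\longrightarrow 0,$$
where ${\rm End}(E_*)$ is the sheaf of parabolic (filtration-preserving) endomorphisms. A quasi-parabolic Lie algebroid connection for $(V,\phi)$ is then the same thing as a lift $V\to{\rm At}(E_*)$ of $\phi_0$. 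Imposing the condition on the residues given by the weights changes the extension class by an explicit correction. The upshot is that parabolic Lie algebroid connections exist if and only if a class
$$\phi_0^*\,{\rm at}(E_*)\in H^1\big(X,\,{\rm End}(E_*)\otimes V^*\big)$$
vanishes, where ${\rm at}(E_*)\in H^1(X,{\rm End}(E_*)\otimes K_X(S))$ is the parabolic Atiyah class. I would construct this class as a \v{C}ech cocycle from local logarithmic connections with prescribed residues. It is classical (Mehta--Seshadri, Biswas) that ${\rm at}(E_*)=0$ exactly when $E_*$ admits a parabolic connection, and that this happens if and only if every indecomposable summand of $E_*$ has parabolic degree zero.

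\emph{Step 2: Serre duality.} By Serre duality, $\phi_0^*{\rm at}(E_*)=0$ if and only if ${\rm at}(E_*)$ pairs to zero with the image of
$$H^0\big(X,{\rm End}(E_*)^*\otimes K_X\otimes V\big)$$
under $\phi_0$. Equivalently, it must pair to zero with every $\theta\otimes\phi_0(v)$-type element of $H^0({\rm End}(E_*)^*\otimes TX(-S)\otimes K_X)$ lying in the image of $\phi_0$.

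\emph{Step 3, split case (main direction).} If $\sigma$ exists, then $\phi_0^*$ is injective on $H^1$, since $\sigma^*\phi_0^*={\rm Id}$. So the class vanishes if and only if ${\rm at}(E_*)=0$, which holds if and only if condition (2) holds. Conversely, if (2) holds, there is a parabolic connection $\nabla$, and $\nabla\circ\phi$ gives a parabolic Lie algebroid connection. This composite works in general, not just in the split case, so (2) always suffices.

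\emph{Step 4, non-split case: the main obstacle.} Here I must show that $\phi_0^*{\rm at}(E_*)=0$ for every $E_*$. Non-splitness means the extension class of $\phi_0$ is nonzero. If $\phi_0$ is surjective, this is the class $\beta\in H^1(X,\ker\phi_0\otimes K_X(S))$; otherwise one first reduces to the image. By Step 3, ${\rm at}(E_*)$ is ``essentially'' $\theta\cdot c$, where $c$ is a generator of $H^1(X,K_X)$, with $\theta$ the parabolic-degree/trace-type part, since the trace-free part can be killed on indecomposable summands.

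The first thing I would try is to decompose $E_*$ into indecomposables and reduce to showing that $\phi_0^*$ kills the image of $H^1(X,K_X)\to H^1(X,K_X(S))$ tensored with ${\rm Id}$. Now $H^1(X,K_X(S))=0$ because $S\neq 0$. So the scalar part of ${\rm at}$ might already die in $K_X(S)$. The real content is then in ${\rm End}(E_*)\otimes K_X(S)$ versus the image in ${\rm End}(E_*)\otimes V^*$, and I expect this to need a careful Serre duality computation pairing with $\beta$. I expect this step to be the crux.

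If $\phi_0$ is not surjective, a nonzero $\phi_0$ vanishing at some point $y$ gives logarithmic poles there; these can absorb the degree, and one handles this case separately. If $\phi_0=0$, any operator $D$ that is ${\mathcal O}$-linear and preserves the filtrations works, provided the residue condition is also satisfied, which is automatic since $\phi_0=0$.
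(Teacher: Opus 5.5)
Your framework (an Atiyah-type obstruction class pushed forward along $\phi_0^*$, analysed by Serre duality) is the one the paper uses. However, the statement you are asked to prove \emph{is} the non-split case, and your Step 4 does not prove it: it stops at ``I expect this step to be the crux'', and the heuristic you offer there would fail. The obstruction to a \emph{parabolic} connection (residues prescribed by the weights) lives in $H^1(X,\,{\rm End}_n(E_*)\otimes V^*)$, where ${\rm End}_n(E_*)$ consists of endomorphisms that are nilpotent with respect to the flags at $S$, since differences of two parabolic connections have such residues. It does not live in $H^1$ of the sheaf of filtration-preserving endomorphisms as in your Step 1; that group governs quasi-parabolic connections. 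Since ${\rm Id}_E$ is not a section of ${\rm End}_n(E_*)$, the scalar part of the class is not of the form ${\rm Id}\otimes c$ with $c\in H^1(X,\,K_X\otimes\mathcal{O}_X(S))=0$. Instead, the trace sends ${\rm End}_n(E_*)\otimes K_X\otimes\mathcal{O}_X(S)$ to $K_X$ (traces of such endomorphisms vanish on $S$), so the scalar part lands in $H^1(X,\,K_X)=\mathbb{C}$, which is exactly where the parabolic-degree obstruction sits. If the scalar part already died in $H^1(X,\,K_X\otimes\mathcal{O}_X(S))$, then together with your claim that the trace-free part dies on indecomposable summands, every parabolic bundle would admit a parabolic logarithmic connection. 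That contradicts your own Step 3.

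The missing argument is the following. Using ${\rm End}_n(E_*)^*={\rm End}_P(E_*)\otimes\mathcal{O}_X(S)$, Serre duality turns the obstruction $\zeta_0$ for $(TX\otimes\mathcal{O}_X(-S),\,\iota)$ into a functional on $H^0(X,\,{\rm End}_P(E_*))$. It turns $\zeta=({\rm Id}\otimes\phi_0^*)_*\zeta_0$ into $\zeta_0\circ({\rm Id}\otimes\phi_0)_*$ on $H^0(X,\,{\rm End}_P(E_*)\otimes\mathcal{O}_X(S)\otimes V\otimes K_X)$. For indecomposable $E_*$, every global parabolic endomorphism is a scalar plus a nilpotent one (Lemma \ref{lem0}). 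Moreover, $\zeta_0$ vanishes on a nilpotent $\tau_0$, which is the justification your phrase ``the trace-free part can be killed'' needs. The reason is that $\zeta_0$ comes from the sub-Atiyah sequence of generalized parabolic connections preserving the filtration by saturated kernels of powers of $\tau_0$, and ${\rm trace}(AB)=0$ when $A$ preserves a filtration and $B$ is nilpotent along it. Hence $\zeta_0$ is a constant multiple of the trace. So $\zeta$ factors through the trace onto $H^0(X,\,{\rm Hom}(TX\otimes\mathcal{O}_X(-S),\,V))$, followed by $\varpi\mapsto\phi_0\circ\varpi\in H^0(X,\,\mathcal{O}_X)=\mathbb{C}$. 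Logarithmic non-splitness says precisely that this last map is zero, since a nonzero constant value could be rescaled to give a splitting $\sigma$. Thus $\zeta=0$, and one sums over indecomposable summands. This works uniformly, including for non-surjective $\phi_0$ and for $\phi_0=0$, so your separate and unproved case analysis (``logarithmic poles can absorb the degree'') is unnecessary. Finally, when $\phi_0$ is surjective, the vanishing of $\varpi\mapsto\phi_0\circ\varpi$ is equivalent, via the connecting map of $0\to\ker\phi_0\to V\to TX\otimes\mathcal{O}_X(-S)\to 0$, to your extension class being nonzero. That is how your instinct about pairing with that class could have been made precise.
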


In \cite{ABKS}, Theorem \ref{cori} was proved under the assumption that the vector bundle
$V$ stable. Since a stable vector bundle is indecomposable, if $(V,\, \phi)$ is a
logarithmically split holomorphic Lie algebroid, and $V$ is stable, then $\phi$ is an
isomorphism of $V$ with $TX\otimes{\mathcal O}_X(-S)$.

Finally, we also provide some conditions for a parabolic vector bundle to admit a flat (same as integrable)
Lie algebroid connection compatible with the parabolic structure (see Corollary \ref{cor:integrable}).

\begin{theorem}\label{thmii}
Let $(V,\, \phi)$ be a holomorphic Lie algebroid on $X$ satisfying the condition that $\phi$
vanishes over every parabolic point $x\, \in\, S$, and let $E_*$ be a parabolic vector bundle. Then
the following two statements hold:
\begin{enumerate}
\item the parabolic vector bundle $E_*$ admits a flat quasi-parabolic Lie algebroid connection.
\item If the image of $\phi$ is either
\begin{enumerate}
\item strictly contained in $TX\otimes \mathcal{O}_X(-S)$, or
\item equal to $TX\otimes \mathcal{O}_X(-S)$, and the parabolic degree of every indecomposable component of
$E_*$ is zero,
\end{enumerate}
then $E_*$ admits a flat parabolic Lie algebroid connection.
\end{enumerate}
Moreover, if the Lie algebroid is logarithmically split, then (2.b) is actually a necessary and
sufficient condition for $E_*$ to admit a flat parabolic Lie algebroid connection.
\end{theorem}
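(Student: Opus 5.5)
Since $X$ is a Riemann surface, $\bigwedge^2 V^*$ has rank $\binom{r}{2}$ where $r={\rm rank}(V)$, so curvature need not vanish automatically. The natural strategy is to pull back ordinary (logarithmic) connections along $\phi$. Since $\phi$ vanishes on $S$, it factors as $\phi_0\,:\,V\,\longrightarrow\, TX\otimes{\mathcal O}_X(-S)$. Moreover $\phi_0$ is a homomorphism of Lie algebroids, because $TX\otimes{\mathcal O}_X(-S)$ is the logarithmic tangent Lie algebroid and $\phi$ preserves brackets. If $\nabla$ is a logarithmic connection on $E$ with poles on $S$, viewed as $\nabla\,:\,E\,\longrightarrow\, E\otimes K_X\otimes{\mathcal O}_X(S)$, then $D\,:=\,({\rm Id}_E\otimes \phi_0^*)\circ\nabla$ is a Lie algebroid connection for $(V,\,\phi)$. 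Its curvature is $\phi_0^*$ applied to the curvature of $\nabla$, and this vanishes because $\bigwedge^2(K_X\otimes{\mathcal O}_X(S))\,=\,0$ on a curve. So every pullback of a logarithmic connection is flat, and it preserves the quasi-parabolic flags (respectively has the parabolic residues) whenever $\nabla$ does.

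For (1), I would find a logarithmic connection $\nabla$ on $E$, singular over $S$, that preserves the quasi-parabolic filtrations, with no condition on residues. Such a connection always exists: the obstruction lies in $H^1(X,\, \mathcal{E}nd_{\rm par}(E)\otimes K_X(S))$, which by Serre duality is dual to $H^0$ of the strongly parabolic-dual endomorphisms twisted by $\mathcal{O}(-S)$. The Atiyah-type class can be killed by using the freedom to choose the residues, via the standard Mehta--Seshadri/Biswas argument. Alternatively, I would use the earlier result of the paper that a quasi-parabolic Lie algebroid connection exists, and reduce to the logarithmic case by taking $(V,\phi)\,=\,(TX(-S),\,{\rm id})$. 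Pulling back gives the flat quasi-parabolic connection.

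For (2.b), when the degree condition holds, the Mehta--Seshadri/Biswas criterion (the case $V\,=\,TX(-S)$ of the earlier criterion) gives a parabolic logarithmic connection, and its pullback is flat parabolic. For (2.a), when ${\rm Im}(\phi_0)\,\subsetneq\, TX(-S)$, the image is a line subbundle $L\,=\,TX(-S-Z)$ with $Z\,>\,0$ effective. I would then view $\phi_0$ as factoring through $L$, which is a Lie algebroid with anchor vanishing on $S+Z$. A parabolic $L$-connection on $E_*$ exists by Theorem \ref{thmi}, because $(L,\,{\rm incl})$ is logarithmically non-split: the inclusion $L\,\longrightarrow\, TX(-S)$ has no splitting since $Z\,>\,0$. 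Concretely, a logarithmic connection with poles on $S+Z$ and prescribed residues on $S$ exists with no degree obstruction, because the residues at $Z$ absorb the degree. Pulling back along $V\,\longrightarrow\, L$ gives a flat parabolic connection, again because $L$ has rank one.

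For the last claim, assume $(V,\,\phi)$ is logarithmically split. Then any parabolic Lie algebroid connection $D$ composed with $\sigma$ yields a parabolic logarithmic connection on $E_*$. By the necessity direction of Theorem \ref{cori}, this forces the degree condition. Moreover, a splitting forces $\phi_0$ to be surjective, so we are in case (2.b). The main obstacle is step (2.a): it requires checking carefully that the residue bookkeeping at $Z$ removes the degree obstruction in the Atiyah-class computation.
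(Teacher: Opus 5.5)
Your proposal is correct and is essentially the paper's proof. Flatness comes from pulling back, along a Lie algebroid map, a connection for a rank-one Lie algebroid ($L=\mathrm{image}(\phi)$ or $TX\otimes\mathcal{O}_X(-S)$): (2.a) comes from Theorem \ref{thmi} applied to $(L,\,\iota)$, (2.b) from \cite{BL}, and the final claim from composing with $\sigma$ together with the fact that a splitting forces $\phi_0$ to be surjective.

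Two small remarks:
\begin{itemize}
\item For (1), the paper makes your ``freedom to choose residues'' precise. It chooses auxiliary real weights $\beta^x_i$ with $\deg(E)+\sum_{x,i}\beta^x_i(\dim E^i_x-\dim E^{i+1}_x)=0$ on each indecomposable summand and invokes \cite{BL}. Your ``alternative'' route has no counterpart in the paper: $(TX\otimes\mathcal{O}_X(-S),\,\iota)$ is logarithmically split, so Corollary \ref{cor1} does not apply to it.
\item The obstacle you flag in (2.a) is not real. Theorem \ref{thmi} applies verbatim to $(L,\,\iota)$, so no residue bookkeeping at $Z$ is needed.
\end{itemize}
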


\section{Lie algebroid connections and parabolic bundles}

\subsection{Holomorphic Lie algebroid connection}

Let $X$ be a compact connected Riemann surface. The holomorphic cotangent and tangent bundles
of $X$ will be denoted by $K_X$ and $TX$ respectively. The first
holomorphic jet bundle of a holomorphic vector bundle
$W$ on $X$ will be denoted by $J^1(W)$; so $J^1(W)$ is a holomorphic vector bundle on $X$
that fits in the following short exact sequence of holomorphic vector bundles on $X$:
$$
0\, \longrightarrow\, W\otimes K_X \, \longrightarrow\, J^1(W) \, \longrightarrow\, W
\, \longrightarrow\, 0.
$$

A $\mathbb{C}$--Lie algebra structure on a holomorphic vector
bundle $V$ on $X$ is a $\mathbb{C}$--bilinear pairing defined by a sheaf homomorphism
$$
[-,\, -] \,\,:\,\, V\otimes_{\mathbb C} V \,\, \longrightarrow\,\, V,
$$
which is given by an ${\mathcal O}_X$--linear holomorphic
homomorphism $$J^1(V)\otimes J^1(V)\, \longrightarrow\, V$$ of vector bundles, such
that $$[s,\, t]\,=\, -[t,\, s]\ \ \text{ and }\ \ [[s,\, t],\, u]+[[t,\, u],\, s]+[[u,\, s],\, t]\,=\,0$$
for all locally defined holomorphic sections $s,\, t,\, u$ of $V$.

The Lie bracket operation on $TX$ gives the structure of a $\mathbb C$--Lie algebra on it. More
generally, for any effective divisor $S$ on $X$, the holomorphic line bundle $TX\otimes {\mathcal
O}_X(-S)$ has the structure of a $\mathbb C$--Lie algebra given by the operation of Lie bracket
of vector fields.

A \textit{holomorphic Lie algebroid} on $X$ is a pair $(V,\, \phi)$, where
\begin{enumerate}
\item $V$ is a holomorphic vector bundle on $X$ equipped with the structure of a $\mathbb{C}$--Lie algebra,

\item $\phi \, : \, V \longrightarrow TX$ is a holomorphic vector bundle homomorphism, and

\item $[s,\, f\cdot t]\,=\, f\cdot [s,\, t]+\phi(s)(f)\cdot t$ for all locally defined holomorphic sections
$s,\, t$ of $V$ and all locally defined holomorphic functions $f$ on $X$.
\end{enumerate}
The above homomorphism $\phi$ is called the \textit{anchor map} of the Lie algebroid. As a consequence of (3),
this $\phi$ is a Lie algebroid map (see \cite[Remark 2.1]{ABKS}).

Let $(V,\, \phi)$ be a holomorphic Lie algebroid on $X$. We have the dual homomorphism
\begin{equation}\label{e2}
\phi^*\ :\ K_X\ \longrightarrow\ V^*
\end{equation}
of $\phi$. Let
\begin{equation}\label{e3}
q\,\, :\,\, V^* \,\, \longrightarrow\,\, V^*/\phi^*(K_X)\,\,=:\,\, {\mathcal Q}
\end{equation}
be the corresponding quotient map. Note that the above coherent analytic sheaf ${\mathcal Q}$ may
have torsion. For any point $y\, \in\, X$, the quotient ${\mathcal Q}/\mathbf{m}_y {\mathcal Q}$, where
$\mathbf{m}_y\, \subset\, {\mathcal O}_X$
is the maximal ideal associated to $y$, will be denoted by ${\mathcal Q}_y$.

The fiber over any point $y\,\in\, X$ of any vector bundle $W$ on $X$ will be denoted by $W_y$.

A holomorphic Lie algebroid connection on a holomorphic vector bundle $E$ on
$X$ --- for the Lie algebroid $(V,\, \phi)$ --- is a first order holomorphic differential operator
$$
D\,\,:\,\, E\,\,\longrightarrow\, \, E\otimes V^*
$$
such that
\begin{equation}\label{e-4}
D(fs) \,=\, fD(s) + s\otimes \phi^*(df)
\end{equation}
for all locally defined holomorphic sections $s$ of $E$ and all locally defined holomorphic
functions $f$ on $X$, where $\phi^*$ is the homomorphism constructed in \eqref{e2}.

Note that an usual holomorphic connection on $E$ is a holomorphic Lie algebroid connection on $E$ for the
holomorphic Lie algebroid $(TX,\, {\rm Id}_{TX})$; see \cite{At}, \cite{De} for holomorphic connections.

Consider a holomorphic Lie algebroid connection $D\,:\, E\,\longrightarrow\, E\otimes V^*$
on $E$ for the Lie algebroid $(V,\, \phi)$. Fix a point $x\, \in\, X$. Take any $v\, \in\, E_x$. Choose
a holomorphic section $s$ of $E$, defined on an open neighborhood $U$ of $x$, such that
$s(x)\,=\, v$. Consider $D(s)\, \in\, H^0\left(U,\, (E\otimes V^*)\big\vert_U\right)$. Let
\begin{equation}\label{ws}
\widehat{s} \,:=\, ({\rm Id}_E\times q)(D(s))(x) \, \in\, E_x\otimes {\mathcal Q}_x
\end{equation}
be the image, where $q$ is the projection in \eqref{e3}. If $s_1$ is another holomorphic section of
$E$, defined on an open neighborhood of $x$, with $s_1(x) \,=\, v$, then
\begin{equation}\label{e3a}
s-s_1\,\,=\,\, f\cdot t,
\end{equation}
where $t$ is a holomorphic section of $E$ defined on an
open neighborhood of $x$, and $f$ is a holomorphic function defined around $x$ with $f(x)\,=\, 0$. From \eqref{e-4}
and \eqref{e3a} it follows that
\begin{equation}\label{e3b}
(D(s) -D(s_1))(x) \,\,=\,\, t(x)\otimes \phi^*(df)(x) + f(x)\cdot (D(t))(x)
\,\,=\,\,t(x)\otimes \phi^*(df)(x),
\end{equation}
because $f(x)\,=\, 0$. Since $q(\phi^*(df))\,=\, 0$, where $q$ is the projection in \eqref{e3}, from
\eqref{e3b} it follows that $$\widehat{s} \,=\, ({\rm Id}_E\times q)(D(s))(x) \,=\,
({\rm Id}_E\times q)(D(s_1))(x).$$
Consequently, we get a linear map
\begin{equation}\label{e4}
{\mathcal S}_x\,\,:\,\, E_x\,\, \longrightarrow\,\, E_x\otimes {\mathcal Q}_x
\end{equation}
that sends any $v\, \in\, E_x$ to $\widehat{s}\,:=\, ({\rm Id}_E\times q)(D(s))(x)$, where $s$ is
any holomorphic section of $E$, defined around $x$, with $s(x)\,=\, v$. As shown above,
${\mathcal S}_x(v)$ does not depend on the choice of $s$.

\subsection{Parabolic vector bundles}\label{Para}

Let $S \,=\, \{x_1,\, \cdots,\, x_m\}\, \subset\, X$ be a nonempty
finite subset, whose elements will be called the parabolic points. Let $E$ be a
holomorphic vector bundle over $X$. A \emph{quasi-parabolic structure} on $E$ over
$x\, \in\, S$ is a strictly decreasing filtration of subspaces of the fiber $E_x$
\begin{equation}\label{eq:a1}
E_x \,=\, E^1_x \,\supsetneq\, E^2_x \,\supsetneq\, \, \cdots\, \supsetneq\, E^{\ell_x}_x\,
\supsetneq\, E^{\ell_x+1}_x \,=\, 0
\end{equation}
for some integer $\ell_x$. A parabolic structure on $E$ over $x\, \in\, S$ is a quasi-parabolic filtration
as in \eqref{eq:a1} together with a sequence of real numbers
\begin{equation}\label{ew}
0 \,\leq\, \alpha^x_1 \,< \,\cdots\, <\, \alpha^x_{\ell_x} \,< \,1.
\end{equation}
Note that $\ell_x\, \geq\, 1$ if $E\, \not=\, 0$. A parabolic structure on $E$ is parabolic structure
on $E$ on every $x\, \in\, S$ (see \cite{MS}, \cite{MY}). A parabolic vector bundle on $X$ consists
of a holomorphic vector bundle $E$ on $X$ together with a parabolic structure on $E$. For notational
convenience, such a parabolic vector bundle will also be denoted by $E_*$.

Take a parabolic vector bundle
\begin{equation}\label{e6}
E_*\,=\, (E,\, \{\{E^i_x\}_{i=1}^{\ell_x}\}_{x\in S},\, \{\{\alpha^x_i\}_{i=1}^{\ell_x}\}_{x\in S})
\end{equation}
as above. For any $x\,\in\, S$ and any $1\,\leq\, i\,\leq\,\ell_x+1$, let ${\mathcal E}_{x,i}$ be the unique holomorphic vector bundle
on $X$ that fits in the following short exact sequence of coherent analytic sheaves on $X$:
\begin{equation}\label{e5}
0\, \longrightarrow\, {\mathcal E}_{x,i}\, \xrightarrow{\,\,\,\iota_{x,i}\,\,}\, E \,
\longrightarrow\, E_x/E^i_x\, \longrightarrow\, 0.
\end{equation}
So ${\mathcal E}_{x,i}$ is identified with $E$ over the complement $X\setminus\{x\}$. These $\{{\mathcal E}_{x,i}\}$
form a decreasing sequence of subsheaves of $E$ for each $x\,\in \,S$:
\begin{equation}\label{eq:parSubsheaves}
E\,=\,{\mathcal E}_{x,1}\,\supsetneq\, {\mathcal E}_{x,2} \,\supsetneq\,\cdots
\,\supsetneq\, {\mathcal E}_{x,l_x} \,\supsetneq \,{\mathcal E}_{x,l_x+1}\,=\,E\otimes {\mathcal O}_X(-x).
\end{equation}

Consider the endomorphism bundle $\text{End}(E)\,=\, E\otimes E^*$. It has a coherent analytic
subsheaf
\begin{equation}\label{pe}
\text{End}_P(E_*)\, \subset \, \text{End}(E)
\end{equation}
defined by the following condition: For any
open subset $U\, \subset\, X$, any $$s\,\, \in\,\, H^0(U,\, \text{End}(E)\big\vert_U)$$ lies
in $H^0(U,\, \text{End}_P(E_*)\big\vert_U)$ if and only if 
$s(x)(E^i_x)\, \subset\, E^i_x$ for all $x\, \in\, S\bigcap U$ and $1\, \leq\, i\, \leq\, \ell_x$. Let
\begin{equation}\label{pe2}
\text{End}_n(E_*)\,\ \subset\, \ \text{End}_P(E_*)
\end{equation}
be the subsheaf defined by all $s$ as above such that $s(x)(E^i_x)\, \subset\, E^{i+1}_x$
for all $x\, \in\, S\bigcap U$ and $1\, \leq\, i\, \leq\, \ell_x$.

\begin{definition}\label{def2}
A parabolic vector bundle $E_*$ is called \textit{decomposable} if there are parabolic vector bundles
$A_*$ and $B_*$ of positive ranks such that $E_*\,=\, A_*\oplus B_*$ (see \cite{MY,Yok} for details on operations between parabolic vector bundles). A parabolic vector bundle $E_*$ is
called \textit{indecomposable} if it is not decomposable.
\end{definition}

\begin{lemma}\label{lem0}
A parabolic vector bundle $E_*$ on $X$ is indecomposable if and only if for every section $\tau
\, \in\, H^0(X,\, {\rm End}_P(E_*))$ there is some constant $\lambda\, \in\, {\mathbb C}$ such that
$\tau- \lambda\cdot {\rm Id}_{E}$ is a nilpotent endomorphism of $E_*$. Moreover, if such $\lambda$
exists, then it can be chosen as $\lambda=\frac{1}{\rm rank(E)}\rm trace(\tau)$.
\end{lemma}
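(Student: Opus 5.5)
The plan is to reduce the statement to the structure theory of the finite-dimensional algebra $A\,:=\,H^0(X,\,{\rm End}_P(E_*))$. Since $X$ is compact, ${\rm End}_P(E_*)$ is a coherent sheaf, so $A$ is a finite-dimensional associative unital $\mathbb C$-algebra. A parabolic direct sum decomposition $E_*\,=\,A_*\oplus B_*$ is the same thing as a nontrivial idempotent $e\,\in\, A$, that is, $e^2\,=\,e$ with $e\,\neq\, 0,\,{\rm Id}_E$. Indeed, the image and kernel of an idempotent endomorphism of $E$ are subbundles, since the rank of $e$ is locally constant (the eigenvalues lie in $\{0,1\}$). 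Because $e$ preserves every $E^i_x$, we get $E^i_x\,=\,(E^i_x\cap {\rm Im}\, e_x)\oplus (E^i_x\cap \ker e_x)$. So the quasi-parabolic filtrations, with the induced weights, split compatibly. Conversely, the projection onto a parabolic direct summand lies in $A$. Hence $E_*$ is indecomposable if and only if $A$ has no idempotents other than $0$ and ${\rm Id}_E$.

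For the ``if'' direction, assume that for every $\tau\,\in\, A$ there is $\lambda$ with $\tau-\lambda{\rm Id}_E$ nilpotent. If $e$ is an idempotent, then $e-\lambda$ nilpotent forces the eigenvalues of $e$ at every point to equal $\lambda$. As they lie in $\{0,1\}$, either $e$ is nilpotent, whence $e\,=\,e^n\,=\,0$, or $e-{\rm Id}_E$ is nilpotent, whence ${\rm Id}_E-e\,=\,0$ (it is also idempotent). So $E_*$ is indecomposable.

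For the ``only if'' direction, which I expect to be the main point, take $\tau\,\in\, A$ and use Fitting's lemma in the finite-dimensional algebra $A$. Since $\mathbb C[\tau]\,\subset\, A$ is finite-dimensional, $\tau$ has a minimal polynomial $\prod_j (t-\lambda_j)^{n_j}$ with distinct $\lambda_j$. The Chinese remainder theorem gives polynomial idempotents $e_j\,=\,p_j(\tau)\,\in\, A$ with $\sum e_j\,=\,1$ and $e_je_k\,=\,0$ for $j\,\neq\, k$. If there were two or more distinct roots, each $e_j$ would be nonzero, hence a nontrivial idempotent, contradicting indecomposability. So the minimal polynomial is $(t-\lambda)^n$, meaning $(\tau-\lambda{\rm Id}_E)^n\,=\,0$ as an endomorphism of $E$. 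Because $\tau-\lambda{\rm Id}_E$ lies in $A$, it is a nilpotent endomorphism of $E_*$.

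For the final claim, suppose $\tau-\lambda{\rm Id}_E$ is nilpotent. Then at every point all eigenvalues of $\tau$ equal $\lambda$, so ${\rm trace}(\tau)\,=\,{\rm rank}(E)\,\lambda$, which is a constant function. This gives $\lambda\,=\,\frac{1}{{\rm rank}(E)}{\rm trace}(\tau)$. The only genuinely non-formal input is the subbundle/filtration splitting for idempotents, and I would check it fiberwise as above.
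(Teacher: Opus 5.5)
Your proof is correct and is essentially the paper's argument, rephrased in terms of idempotents of the finite-dimensional algebra $H^0(X,\,{\rm End}_P(E_*))$. Your Chinese-remainder idempotents $p_j(\tau)$ are exactly the projections onto the generalized eigenbundles that the paper uses. Finite-dimensionality of this algebra plays the role of the paper's observation that the characteristic-polynomial coefficients of $\tau$ are constant on the compact connected $X$. Your version also gives an explicit reason why the summands are holomorphic subbundles (the trace of an idempotent is constant), which the paper leaves implicit.
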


\begin{proof}
If $E_*\,=\, A_*\oplus B_*$, where $A_*$ and $B_*$ are parabolic vector bundles of positive ranks,
then $\mu_1\cdot {\rm Id}_{A_*}\oplus \mu_2\cdot {\rm Id}_{B_*}\, \in\,
H^0(X,\, {\rm End}_P(E_*))$ is not of the above form if $\mu_1\, \not=\, \mu_2$ are
two distinct complex numbers.

To prove the converse, assume that $E_*$ is indecomposable. Take any $\tau \, \in\,
H^0(X,\, {\rm End}_P(E_*))$. 
The characteristic polynomial of $\tau$ expands as $\det (\tau -t\cdot{\rm Id}_{E})\,=\, \sum_{i=0}^r f_it^{r-i}$, where $r\,=\, \text{rank}(E)$ and $f_i=(-1)^i\operatorname{tr}(\wedge^i \tau)$. Then, $f_i$ are holomorphic functions
on $X$. Since $X$ is compact and connected, the functions $f_i$ are constants.
Thus the eigenvalues of $\tau (x)$ are independent of $x\, \in\, X$.

Let us prove that $\tau(x)$ can only have one eigenvalue. If $\tau(x)$ has more than one
eigenvalue, then $E$ splits into a direct sum
\begin{equation}\label{ej}
E\ =\ \bigoplus_i E_i
\end{equation}
given by the generalized eigenbundles
$E_i\, \subset\, E$ for $\tau$. Since $\tau$ is a parabolic endomorphism, for each parabolic point $x\,\in\, S$,
the endomorphism $\tau(x)$ preserves each subspace $E_x^j\,\subseteq\, E_x$. Thus, $E_x^j$ is itself
split as a direct sum of generalized eigenspaces of $\tau(x)\big\vert_{E_x^j}$
(see \eqref{ej}). It is clear by construction that
those generalized eigenspaces are $E_x^j \cap (E_i)_x$ and that
\begin{equation}
\label{eq:lem0}
E_x^j\ =\ \bigoplus_i E_x^j \cap (E_i)_x \quad \forall\ \, x\,\in\, S.
\end{equation}
Then, the filtration
$$(E_i)_x = E_x^1 \cap (E_i)_x \, \supset \, E_x^2 \cap (E_i)_x \supset \, \cdots \,\supset \, E_x^{l_x+1}
\cap (E_i)_x\, =\, 0$$
on each $x\,\in\, S$ induces a parabolic structure $E_{i,*}$ on $E_i$, where the weight
of $E_x^j\cap (E_i)_x$ is the maximum of all $\alpha_k^x$ such that $E_x^j\cap E_i
\, \subseteq\, E_x^{k}\cap E_i$. We note that this parabolic structure on $E_i$
coincides with the induced parabolic structure on the subbundle $E_i\subset E$ by $E_*$. Then \eqref{eq:lem0}
implies that $E_*\,=\,\bigoplus_i E_{i,*}$. But this contradicts the
given condition that $E_*$ is indecomposable.

Thus, $\tau$ has a unique eigenvalue $\lambda$ with multiplicity $r$. Since the trace of $\tau$ is the sum of
its eigenvalues, it follows that $\lambda\,=\,\frac{1}{r} \rm trace(\tau)$ and
$\tau- \lambda\cdot {\rm Id}_{E}$ is a nilpotent endomorphism of $E_*$.
\end{proof}

\section{Connections on parabolic bundles}\label{conn_para}

\subsection{Quasi-parabolic Lie algebroid connections}

Let $(V,\, \phi)$ be a holomorphic Lie algebroid on $X$. Take a parabolic vector bundle
$E_*$ over $X$ as in \eqref{e6}.

\begin{definition}
\label{def:parabolicConnection}
A \textit{quasi-parabolic Lie algebroid connection} on $E_*$ is a holomorphic Lie algebroid connection
$$
D\,\,:\,\, E\,\,\longrightarrow\, \, E\otimes V^*
$$
on $E$ (see \eqref{e-4}) such that
\begin{equation}\label{e7}
D({\mathcal E}_{x,i}) \,\, \subset\, \, {\mathcal E}_{x,i}\otimes V^*
\end{equation}
for all $x\, \in\, S$ and every $1\, \leq\, i\, \leq\, \ell_x+1$ (see \eqref{e5}).
\end{definition}

The following Lemmata from \cite{ABKS} will be useful in describing the
quasi-parabolic Lie algebroid connections.

\begin{lemma}[{\cite[Lemma 3.2]{ABKS}}]\label{lem1}
Fix a point $x\, \in\, S$ and $1\, \leq\, i\, \leq\, \ell_x+1$. Take a holomorphic Lie algebroid connection
$$D\ :\ E\ \longrightarrow \
E\otimes V^*$$ such that the homomorphism ${\mathcal S}_x \,\,:\,\, E_x\,\, \longrightarrow\,\, E_x\otimes {\mathcal Q}_x$ in \eqref{e4} maps $E^i_x\, \subset\, E_x$ to
$E^i_x\otimes {\mathcal Q}_x$. Then $D$ produces a homomorphism
$$
{\mathbb D}_{x,i}
 \,\, :\,\, {\mathcal E}_{x,i}\,\, \longrightarrow\,\, (E_x/E^i_x)\otimes\phi^*((K_X)_x)
\,\,\subset\,\, (E_x/E^i_x)\otimes V^*_x
$$
which satisfies the identity ${\mathbb D}_{x,i}(f\cdot s) \,=\, f(x)\cdot {\mathbb D}_{x,i}(s)$ for
every holomorphic section $s$ of ${\mathcal E}_{x,i}$ defined around $x\, \in\, S$ and every holomorphic
function $f$ defined on an open neighborhood of $x$, where $\phi^*$ is the homomorphism in \eqref{e2}.
\end{lemma}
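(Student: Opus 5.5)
The plan is to build ${\mathbb D}_{x,i}$ from the composition of $D$ with the quotient map, and then to show that this composition is ${\mathcal O}_X$-linear up to evaluation at $x$, with values in the subspace $\phi^*((K_X)_x)$. Let $p\,:\,E\otimes V^*\,\longrightarrow\,(E_x/E^i_x)\otimes V^*_x$ denote the composition of evaluation at $x$ with the projection $E_x\,\longrightarrow\, E_x/E^i_x$. For a holomorphic section $s$ of ${\mathcal E}_{x,i}$ defined near $x$, set
$$
{\mathbb D}_{x,i}(s)\ :=\ p\big(D(\iota_{x,i}(s))\big)\ \in\ (E_x/E^i_x)\otimes V^*_x .
$$
This is clearly additive and local, so there are two things to check. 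The first is the twisted linearity identity ${\mathbb D}_{x,i}(fs)\,=\,f(x)\,{\mathbb D}_{x,i}(s)$. The second is that the image lies in $(E_x/E^i_x)\otimes\phi^*((K_X)_x)$.

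For the linearity, I will apply the Leibniz rule \eqref{e-4} to get $D(fs)\,=\,fD(s)+s\otimes\phi^*(df)$. Evaluating at $x$ and projecting, the second term becomes $[s(x)]\otimes\phi^*(df)(x)$. Since $s$ is a section of ${\mathcal E}_{x,i}$, the exact sequence \eqref{e5} gives $s(x)\,\in\, E^i_x$, so $[s(x)]\,=\,0$ in $E_x/E^i_x$. What remains is $f(x)\,p(D(s))$, which is the required identity. A consequence is that ${\mathbb D}_{x,i}$ factors through the fiber ${\mathcal E}_{x,i}/\mathbf{m}_x{\mathcal E}_{x,i}$, so it is a genuine homomorphism from ${\mathcal E}_{x,i}$ to the skyscraper sheaf at $x$.

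For the image statement, it suffices to show that $({\rm Id}\otimes q)$ kills ${\mathbb D}_{x,i}(s)$ in $(E_x/E^i_x)\otimes{\mathcal Q}_x$. Here one must be careful: because ${\mathcal Q}$ can have torsion, the vanishing of the image in ${\mathcal Q}_x$ alone does not obviously imply membership in $\phi^*((K_X)_x)$. I would handle this by working with the honest sequence $K_X\otimes{\mathbb C}_x\,\to\, V^*_x\,\to\,{\mathcal Q}_x\,\to\,0$, which is right exact because it is obtained by tensoring with the residue field. The kernel of $V^*_x\,\to\,{\mathcal Q}_x$ is exactly the image $\phi^*((K_X)_x)$, and tensoring with the vector space $E_x/E^i_x$ preserves this. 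Now $({\rm Id}\otimes q)(D(s))(x)\,=\,{\mathcal S}_x(s(x))$ by the definition of ${\mathcal S}_x$ in \eqref{e4}, and we know $s(x)\,\in\, E^i_x$. The hypothesis of the lemma says that ${\mathcal S}_x(E^i_x)\,\subset\, E^i_x\otimes{\mathcal Q}_x$, so projecting to $(E_x/E^i_x)\otimes{\mathcal Q}_x$ gives zero. This shows the image lies in $(E_x/E^i_x)\otimes\phi^*((K_X)_x)$.

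I expect the main obstacle to be this last identification, namely making precise that the fiber-level kernel is exactly $\phi^*((K_X)_x)$ even when $\phi$ vanishes at $x$ and ${\mathcal Q}$ has torsion there. The right exactness of $-\otimes{\mathbb C}_x$ is what resolves it. In the degenerate case where $\phi^*$ vanishes at $x$, the argument still goes through and simply forces ${\mathbb D}_{x,i}\,=\,0$.
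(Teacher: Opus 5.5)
Your proof is correct and complete. You define ${\mathbb D}_{x,i}(s)$ as the image of $D(s)(x)$ in $(E_x/E^i_x)\otimes V^*_x$, kill the Leibniz term using $s(x)\in E^i_x$, and identify $\ker\big(V^*_x\to{\mathcal Q}_x\big)$ with $\phi^*((K_X)_x)$ via the right exact sequence $(K_X)_x\to V^*_x\to{\mathcal Q}_x\to 0$ together with the hypothesis on ${\mathcal S}_x$. The paper does not reprove this lemma; it quotes it from \cite{ABKS}. Your argument is the natural direct one, the same kind of computation the paper uses in \eqref{e3b} to show that ${\mathcal S}_x$ is well defined.
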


\begin{proposition}[{\cite[Proposition 3.3]{ABKS}}]\label{prop1}
Take a holomorphic Lie algebroid connection $D\,:\, E\,\longrightarrow \, E\otimes V^*$ on $E$
for the holomorphic Lie algebroid $(V,\, \phi)$. It gives a quasi-parabolic
Lie algebroid connection on the
parabolic vector bundle $E_*$ if and only if the following two statements hold:
\begin{enumerate}
\item For every $x\, \in\, S$, the homomorphism ${\mathcal S}_x$ in \eqref{e4} maps $E^i_x\, \subset\, E_x$ to
$E^i_x\otimes {\mathcal Q}_x$ for all $1\, \leq\, i\, \leq\, \ell_x+1$.

\item For all $x\, \in\, S$, the homomorphism of fibers
$$\phi^*_x\ :\ (K_X)_x\ \longrightarrow\ V^*_x$$
(see \eqref{e2}) is the zero map.
\end{enumerate}
\end{proposition}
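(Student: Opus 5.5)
Proposition \ref{prop1} has two directions, and I would prove them separately, working locally around each $x\in S$. The idea is to test $D$ on sections of ${\mathcal E}_{x,i}$ of the form $f\cdot t$, where $f$ vanishes at $x$. Leibniz then produces a term $t\otimes\phi^*(df)$, and the subsheaf condition \eqref{e7} forces this term to vanish. Everything else reduces to the pointwise map ${\mathcal S}_x$ of \eqref{e4}.

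\emph{Necessity.} Assume $D$ is quasi-parabolic. Fix $x\in S$ and $i$ with $E^i_x\neq E_x$; for instance $i=\ell_x+1$, where ${\mathcal E}_{x,\ell_x+1}=E\otimes{\mathcal O}_X(-x)$. Let $z$ be a coordinate centred at $x$, and let $t$ be a local section of $E$ with $t(x)\notin E^i_x$. Then $z\,t$ lies in ${\mathcal E}_{x,i}$, so $D(z t)=t\otimes\phi^*(dz)+zD(t)$ must lie in ${\mathcal E}_{x,i}\otimes V^*$. Evaluating at $x$ and projecting to $(E_x/E^i_x)\otimes V^*_x$ gives $[t(x)]\otimes\phi^*_x(dz)=0$. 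Since $[t(x)]\neq0$, we get $\phi^*_x=0$, which is (2).

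For (1), take $v\in E^i_x$ and extend it to a local section $s$ of ${\mathcal E}_{x,i}$; this is possible because ${\mathcal E}_{x,i}$ surjects onto $E^i_x$ at $x$. Then $D(s)(x)\in E^i_x\otimes V^*_x$, because the image of the fibre $({\mathcal E}_{x,i})_x$ in $E_x$ is $E^i_x$. Applying ${\rm Id}\otimes q$ shows ${\mathcal S}_x(v)\in E^i_x\otimes{\mathcal Q}_x$.

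\emph{Sufficiency.} Assume (1) and (2). Lemma \ref{lem1} gives the ${\mathcal O}$-linear map ${\mathbb D}_{x,i}$ with values in $(E_x/E^i_x)\otimes\phi^*((K_X)_x)$. By (2), $\phi^*((K_X)_x)=0$, so ${\mathbb D}_{x,i}=0$. I would unpack Lemma \ref{lem1}'s construction to see that ${\mathbb D}_{x,i}(s)$ is precisely the obstruction class of $D(s)(x)$ in $(E_x/E^i_x)\otimes V^*_x$.

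The point is that for a local section $s$ of ${\mathcal E}_{x,i}$, condition (1) already places $D(s)(x)$ in $E^i_x\otimes V^*_x$ modulo $E_x\otimes\phi^*(K_X)_x$. Lemma \ref{lem1} identifies the remaining ambiguity as ${\mathbb D}_{x,i}(s)$, which we have just shown is $0$. Hence $D(s)(x)\in E^i_x\otimes V^*_x$, which means $D(s)$ is a section of ${\mathcal E}_{x,i}\otimes V^*$ near $x$. Away from $x$ the condition is empty, so \eqref{e7} holds.

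\emph{Main obstacle.} The delicate step is this last identification. I need the exact sequence $E\otimes\phi^*(K_X)\to E\otimes V^*\to E\otimes{\mathcal Q}$ to behave well at the fibre over $x$, even though ${\mathcal Q}$ may have torsion there. That is exactly the point Lemma \ref{lem1} handles, so I would invoke it directly rather than redo the fibrewise diagram chase.
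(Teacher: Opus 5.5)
Your argument is correct, and it is the natural proof that Lemma~\ref{lem1} is set up for (the paper itself does not reprove this result, it quotes it from \cite{ABKS}): you get necessity by testing $D$ on $z\cdot t$ with $t(x)\notin E^i_x$, and sufficiency by noting that under (2) the target $(E_x/E^i_x)\otimes\phi^*((K_X)_x)$ of $\mathbb{D}_{x,i}$ is zero. The ``main obstacle'' you flag does not actually arise: tensoring the exact sequence $K_X\to V^*\to\mathcal{Q}\to 0$ with $\mathcal{O}_X/\mathbf{m}_x$ gives $\mathcal{Q}_x=V^*_x/\phi^*_x((K_X)_x)$, so under (2) the map $q(x)$ is an isomorphism, and condition (1) then says directly that $D(s)(x)\in E^i_x\otimes V^*_x$ for every local section $s$ of $\mathcal{E}_{x,i}$, without any need to invoke Lemma~\ref{lem1}.
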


\subsection{Parabolic Lie algebroid connections}\label{para-conn}

Let $D$ be a quasi-parabolic Lie algebroid connection on a parabolic vector
bundle $E_*$. From Proposition \hyperref[prop1]{\ref*{prop1}.(2)} we know that the homomorphism
of fibers $\phi^*_x\, :\, (K_X)_x\, \longrightarrow\, V^*_x$ is the zero map for every
$x \,\in\, S$. Consequently, the homomorphism $\phi^* \,:\, K_X \,\longrightarrow\, V^*$
factors through the natural homomorphism $K_X \,\hookrightarrow\, K_X \otimes {\mathcal O}_X(S)$,
where $S\,=\,
\sum_{i=1}^m x_i$ is the reduced effective divisor; in other words, we have a unique homomorphism 
\begin{equation}\label{e12}
{\phi}^*_0\,\, : \,\, K_X \otimes {\mathcal O}_X(S) \,\,\longrightarrow\,\, V^*
\end{equation}
whose restriction to the subsheaf $K_X\, \subset\, K_X \otimes {\mathcal O}_X(S)$ coincides with $\phi^*$.

Note that for every $x\, \in\, S$, we have $(K_X \otimes {\mathcal O}_X(S))_x \,=\, \mathbb{C}$
(Poincar\'e adjunction
formula \cite[p.~146]{GH}). Consider the homomorphism $q(x)\, :\, V^*_x\, \longrightarrow\, {\mathcal Q}_x$ (see
\eqref{e3}). Composing it with ${\phi}^*_0(x)$ in \eqref{e12}, we have
\begin{equation}\label{e13}
\psi_x\, :=\, q(x)\circ ({\phi}^*_0(x))\,:\, \mathbb{C}\,=\,
(K_X \otimes {\mathcal O}_X(S))_x \, \longrightarrow\, {\mathcal Q}_x.
\end{equation}

{}From \eqref{e5} we have a short exact sequence
\begin{equation}\label{z0}
0\, \longrightarrow\, {\mathcal E}_{x,i+1}\, \longrightarrow\, {\mathcal E}_{x,i} \,
\longrightarrow\, E^i_x/E^{i+1}_x\,
\longrightarrow\, 0
\end{equation}
for all $x\, \in\, S$ and $1\, \leq\, i\, \leq\, \ell_x$. Since the quasi-parabolic Lie algebroid
connection $D$ preserves both ${\mathcal E}_{x,i}$ and ${\mathcal E}_{x,i+1}$ (see \eqref{e7}), from
\eqref{z0} we have a commutative diagram
\begin{equation}\label{cd2}
\begin{matrix}
0 & \longrightarrow & {\mathcal E}_{x,i+1} & \longrightarrow & {\mathcal E}_{x,i} & \longrightarrow & E^i_x/E^{i+1}_x &
\longrightarrow & 0\\
&&\,\,\, \Big\downarrow D &&\,\,\, \Big\downarrow D &&\,\,\, \Big\downarrow \widehat{D}\\
0 & \longrightarrow & {\mathcal E}_{x,i+1}\otimes V^* & \longrightarrow &
{\mathcal E}_{x,i}\otimes V^* & \longrightarrow & (E^i_x/E^{i+1}_x)\otimes V^* &\longrightarrow & 0
\end{matrix}
\end{equation}
where $\widehat{D}$ is induced by $D$. Since $D$ satisfies \eqref{e-4}, we have
\begin{equation}\label{z-1}
\widehat{D}(fs) \,=\, f\cdot\widehat{D}(s) + s\otimes \phi^*(df)
\end{equation}
for any section $s$ of $E^i_x/E^{i+1}_x$ (note that
$E^i_x/E^{i+1}_x$ is a torsion sheaf supported at $x$) and any holomorphic function $f$ defined around $x\, \in\, X$.
Consider the composition of maps
\begin{equation}\label{z1}
({\rm Id}_{E^i_x/E^{i+1}_x}\times q)\circ \widehat{D}\,\,:\,\, E^i_x/E^{i+1}_x \,\,
\longrightarrow\,\, (E^i_x/E^{i+1}_x)\otimes {\mathcal Q}_x,
\end{equation}
where $q$ is the homomorphism in \eqref{e3}. From \eqref{z-1} it follows that
$({\rm Id}_{E^i_x/E^{i+1}_x}\times q)\circ \widehat{D}$
has the property
$$\left ( ({\rm Id}_{E^i_x/E^{i+1}_x}\times q)\circ \widehat{D} \right)(fs) \,\,=\,\, \left(f\cdot
({\rm Id}_{E^i_x/E^{i+1}_x}\times q)\circ\widehat{D}\right)(s),$$
and hence
$({\rm Id}_{E^i_x/E^{i+1}_x}\times q)\circ \widehat{D}$
 produces a $\mathbb C$--linear homomorphism
\begin{equation}\label{z2}
\widehat{D}_{x,i}\,\,:\,\, E^i_x/E^{i+1}_x \,\, \longrightarrow\,\, (E^i_x/E^{i+1}_x)\otimes {\mathcal Q}_x
\end{equation}
of fibers.

\begin{definition}\label{def:paraLieconn}
The quasi-parabolic Lie algebroid connection $D$ on the parabolic vector bundle $E_*$ for the Lie
algebroid $(V,\, \phi)$ will be called a \textit{parabolic Lie algebroid connection} if the homomorphism
$\widehat{D}_{x,i}$ in \eqref{z2} coincides with the homomorphism $E^i_x/E^{i+1}_x\,\, \longrightarrow\,
\, (E^i_x/E^{i+1}_x)\otimes {\mathcal Q}_x$ defined by $v\, \longmapsto\, v\otimes (\psi_x(\alpha^x_i))$,
where $\psi_x$ is the homomorphism in \eqref{e13} and $\alpha^x_i$ is the parabolic weight in \eqref{ew}.
\end{definition}

Notice that, for $(V,\, \phi)\, = \, (TX\otimes {\mathcal O}_X(-S),\, \iota)$, where $\iota\, : \, TX\otimes 
\mathcal{O}_X(-S) \longrightarrow TX$ is the natural inclusion map, the notion of
parabolic Lie algebroid connection coincides with the usual notion 
of parabolic connection. We also note that when $(V,\, \phi)\, = \, (TX\otimes \mathcal{O}_X(-S), \, 0)$,
a $(V,\, \phi)$-Lie algebroid connection is a strongly parabolic Higgs bundle.

\subsection{The Atiyah exact sequence} 

Let $(V, \, \phi)$ be a holomorphic Lie algebroid on $X$. We first recall the notion of generalized Lie 
algebroid connections on a vector bundle over an open subset of $X$ defined in \cite{ABKS}.

Take an open subset $U\, \subset\, X$, and denote $S_U\, :=\, U \bigcap S$. Fix
a holomorphic function $w$ on $U$. The restriction $(V,\, \phi)\big\vert_U$ of the
holomorphic Lie algebroid $(V,\, \phi)$ to the open subset $U$ will be denoted by $(V_U,\,
\phi_U)$. A {\it generalized Lie algebroid connection with weight $w$} on a holomorphic
vector bundle ${\mathcal W}$ defined over $U$ is a holomorphic differential operator 
$$D \,\,:\,\, {\mathcal W} \,\,\longrightarrow \,\, {\mathcal W} \otimes V^*_U $$
satisfying the Leibniz rule
\begin{equation}\label{e15}
D(fs) \,\,=\,\, f D(s)\, +\, w \cdot s \otimes \phi^*_U (df),
\end{equation}
where $f$ is any locally defined holomorphic function on $U$ and $s$ is any locally
defined holomorphic section of $\mathcal W$; as before, $\phi^*_U$ is the dual of the
anchor map $\phi_U$. Note that $D$ is a holomorphic Lie algebroid connection on
$\mathcal W$ if $w$ is the constant function $1$ on $U$.

A generalized Lie algebroid connection on a holomorphic vector bundle 
$\mathcal W$ defined over $U$ is a pair $(w,\, D)$, where $w$ is a holomorphic function
on $U$ and $D$ is a generalized Lie algebroid connection on $\mathcal W$ with weight $w$. 

Take a generalized Lie algebroid connection $(w,\, D)$ on ${\mathcal W}\, \longrightarrow\, U$. Consider
the homomorphism
$$
({\rm Id}_{\mathcal W}\otimes q)\circ D\,\, :\,\, {\mathcal W}\,\, \longrightarrow\,\, {\mathcal W}\otimes
{\mathcal Q}\big\vert_U,
$$
where $q$ is the projection in \eqref{e3}. From \eqref{e15} it follows immediately that
$$\left(({\rm Id}_{\mathcal W}\otimes q)\circ D \right)(f\cdot s)\,=\, f\cdot \left( ({\rm Id}_{\mathcal W} \otimes q) \circ D \right) (s),$$
where $s$ is any locally defined 
holomorphic section of $\mathcal W$ and $f$ is any locally defined holomorphic function on $U$. Therefore,
$({\rm Id}_{\mathcal W}\otimes q)\circ D$ produces a $\mathbb C$--linear homomorphism of fibers
\begin{equation}\label{e16}
{\mathcal S}_x\,\,:\,\, {\mathcal W}_x\, \,\longrightarrow\, \,{\mathcal W}_x\otimes {\mathcal Q}_x
\end{equation}
for every point $x \,\in\, U$.

Take a holomorphic vector bundle $E$ over $X$.
Let $\mathcal{C}_{E,V}$ denote the sheaf of generalized Lie algebroid connections on $E$ over $X$.
So the sections of $\mathcal{C}_{E,V}$ over an open set $U \,\subset\, X$ are the
generalized Lie algebroid connections on $E\big\vert_U$.
It is evident that $\mathcal{C}_{E,V}$ is a locally free coherent analytic sheaf fitting in the
short exact sequence
\begin{equation}\label{y1}
0\, \longrightarrow\, \text{End}(E)\otimes V^* \, \longrightarrow\, \mathcal{C}_{E,V}
\, \stackrel{\sigma}{\longrightarrow}\, {\mathcal O}_X \, \longrightarrow\, 0;
\end{equation}
the above projection $\sigma$ sends any locally defined generalized Lie algebroid connection $(w,\, D)$
on $E\big\vert_U$ to the holomorphic function $w$ on $U$.

Let $E_*\,=\, (E,\, \{\{E^i_x\}_{i=1}^{\ell_x}\}_{x\in S},\, \{\{\alpha^x_i\}_{i=1}^{\ell_x}\}_{x\in S})$
be a parabolic vector bundle on $X$. Take a holomorphic Lie algebroid $(V, \, \phi)$ on $X$ satisfying the
following condition: For every $x\, \in\, S$, the homomorphism of fibers $\phi^*_x\, :\, (K_X)_x\, \longrightarrow\,
V^*_x$ (see \eqref{e2}) is the zero map.

Take an open subset $U\, \subset\, X$. A \textit{generalized quasi-parabolic Lie algebroid connection} on $E_*$
over $U$ is a generalized Lie algebroid connection
$$
D\,\,:\,\, E\big\vert_U \,\,\longrightarrow\, \, (E\otimes V^*)\big\vert_U
$$
on $E\big\vert_U$ (see \eqref{e-4}) such that
\begin{equation}\label{q1}
D({\mathcal E}_{x,i}\big\vert_U) \,\, \subset\, \, ({\mathcal E}_{x,i}\otimes V^*)\big\vert_U
\end{equation}
for all $x\, \in\, S\bigcap U$ and every $1\, \leq\, i\, \leq\, \ell_x+1$ (see \eqref{e5}).

The following is a generalization of Proposition \ref{prop1}.

\begin{lemma}[{\cite[Lemma 3.6]{ABKS}}]\label{lem3}
A generalized Lie algebroid connection $(w,\, D)$ on $E\big\vert_U$
gives a generalized quasi-parabolic Lie algebroid connection on the
parabolic vector bundle $E_*\big\vert_U$ if and only if the following holds:
For every $x\, \in\, S\bigcap U$, the homomorphism
$$
{\mathcal S}_x\,\,:\,\, E_x\, \,\longrightarrow\, \, E_x\otimes {\mathcal Q}_x
$$
in \eqref{e16} maps $E^i_x\, \subset\, E_x$ to
$E^i_x\otimes {\mathcal Q}_x$ for all $1\, \leq\, i\, \leq\, \ell_x+1$.
\end{lemma}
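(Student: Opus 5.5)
The proof is local: everything is a statement about sheaves over $U$ together with the fiber maps ${\mathcal S}_x$ at points of $S\cap U$. The plan is to reduce to the case $w\,=\,1$, which is Proposition \ref{prop1}, by repeating its argument with the weight $w$ carried along.

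\emph{Necessity.} Suppose $D$ preserves each ${\mathcal E}_{x,i}\big\vert_U$. Take $v\,\in\, E^i_x$ and extend it to a local section $s$ of ${\mathcal E}_{x,i}$; this is possible because ${\mathcal E}_{x,i}$ surjects onto $E^i_x$ at $x$ by \eqref{e5}. Then $D(s)$ is a section of ${\mathcal E}_{x,i}\otimes V^*$. The image of ${\mathcal E}_{x,i}\otimes V^*$ in the fiber $E_x\otimes V^*_x$ lies in $E^i_x\otimes V^*_x$. Applying ${\rm Id}\otimes q$ therefore gives ${\mathcal S}_x(v)\,\in\, E^i_x\otimes{\mathcal Q}_x$. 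Here we use that ${\mathcal S}_x(v)$ is independent of the chosen extension, which follows from \eqref{e15} exactly as in \eqref{e3b}.

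\emph{Sufficiency.} Fix $x\,\in\, S\cap U$ and $i$, and let $z$ be a local coordinate centered at $x$. Locally ${\mathcal E}_{x,i}$ is generated by two kinds of sections:
\begin{itemize}
\item sections $s$ of $E$ with $s(x)\,\in\, E^i_x$;
\item sections of the form $z\cdot t$ with $t$ a section of $E$.
\end{itemize}
For the second kind, \eqref{e15} gives $D(zt) \,=\, zD(t) + w\, t\otimes\phi^*(dz)$. The standing hypothesis $\phi^*_x\,=\,0$ means $\phi^*(dz)$ vanishes at $x$, so both terms vanish at $x$ and hence lie in $E\otimes V^*(-x)\,\subset\, {\mathcal E}_{x,i}\otimes V^*$.

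For the first kind we must show that $D(s)(x)\,\in\, E^i_x\otimes V^*_x$. We know that its image ${\mathcal S}_x(s(x))$ in $E_x\otimes{\mathcal Q}_x$ lies in $E^i_x\otimes{\mathcal Q}_x$. This is the main obstacle, because ${\mathcal Q}$ may have torsion and $\phi^*(K_X)$ need not be a subbundle. Write $D(s)(x)\,=\,a+b$ with $a\,\in\, E^i_x\otimes V^*_x$ and $b$ in the kernel of $E_x\otimes V^*_x\to E_x\otimes {\mathcal Q}_x$. That kernel is $E_x\otimes {\rm image}(\phi^*(K_X)_x\to V^*_x)$, which is $0$ since $\phi^*_x\,=\,0$. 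Hence $b\,=\,0$ and $D(s)(x)\,\in\, E^i_x\otimes V^*_x$.

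Now choose a local frame of $V^*$ and write $D(s)\,=\,\sum_k u_k\otimes e_k$. The previous step gives $u_k(x)\,\in\, E^i_x$, so each $u_k$ is a section of ${\mathcal E}_{x,i}$. Thus $D(s)$ is a section of ${\mathcal E}_{x,i}\otimes V^*$.

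Finally, for a general section $fs$ of ${\mathcal E}_{x,i}$ with $s$ of the first kind, $D(fs) \,=\, fD(s) + w\, s\otimes\phi^*(df)$, and both terms lie in ${\mathcal E}_{x,i}\otimes V^*$. So these generators and the Leibniz rule give the inclusion \eqref{q1}.

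Equivalently, one could cite the proof of \cite[Proposition 3.3]{ABKS} verbatim: $w$ enters only through the Leibniz term, and that term always lands in $E\otimes V^*(-x)$ under the hypothesis $\phi^*_x\,=\,0$.
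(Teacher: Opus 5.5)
Your proof is correct and follows essentially the route the paper relies on. The paper gives no argument of its own here; it cites \cite[Lemma 3.6]{ABKS}, whose proof is the fiberwise argument behind Proposition \ref{prop1} with the weight $w$ carried along. There, the obstruction ${\mathbb D}_{x,i}$ of Lemma \ref{lem1} to $D$ preserving ${\mathcal E}_{x,i}$ takes values in $(E_x/E^i_x)\otimes\phi^*((K_X)_x)$, which vanishes under the standing hypothesis $\phi^*_x\,=\,0$. That is exactly your observation that $q(x)\,:\,V^*_x\,\longrightarrow\,{\mathcal Q}_x$ is injective at the points of $S$.

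Your split into two kinds of generators is harmless but unnecessary, since every local section $s$ of ${\mathcal E}_{x,i}$ near $x$ already satisfies $s(x)\,\in\, E^i_x$.
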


Let $(w,\, D)$ be a generalized quasi-parabolic Lie algebroid connection on $E_*\big\vert_U$.
Then using the analog of \eqref{cd2} it follows that the analog of the
homomorphisms ${\mathcal S}_x\big\vert_{E^i_x}$,
$1\,\leq\, i\, \leq\, \ell_x$ (see Lemma \ref{lem3}), produce homomorphisms
\begin{equation}\label{z3}
\widehat{D}_{x,i}\,\,:\,\, E^i_x/E^{i+1}_x \,\, \longrightarrow\,\, (E^i_x/E^{i+1}_x)\otimes {\mathcal Q}_x
\end{equation}
(this is similar to \eqref{z2}).

\begin{definition}
\label{def:genParabolicCon}
A \textit{generalized parabolic Lie algebroid connection} on $E_*\big\vert_U$ is
a generalized quasi-parabolic Lie algebroid connection $(w,\, D)$ on $E_*\big\vert_U$ such that
for every $x\, \in\, S\bigcap U$, and every $1\,\leq\, i\, \leq\, \ell_x$, the homomorphism
$\widehat{D}_{x,i}$ in \eqref{z3} coincides with the homomorphism
$$E^i_x/E^{i+1}_x \,\, \longrightarrow\,\, (E^i_x/E^{i+1}_x)\otimes {\mathcal Q}_x$$ defined by
$v\, \longmapsto\, w(x)\cdot v\otimes (\psi_x(\alpha^x_i))$, where $\psi_x$ is the homomorphism
in \eqref{e13} and $\alpha^x_i$ is the parabolic weight in \eqref{ew}.
\end{definition}

The sheaf of generalized parabolic Lie algebroid connections on $E_*$ is evidently a subsheaf
of $\mathcal{C}_{E,V}$ (see \eqref{y1}). In fact, it is a coherent analytic subsheaf
of $\mathcal{C}_{E,V}$.

The sheaf of generalized parabolic Lie algebroid connections on $E_*$
will be denoted by $\mathcal{C}_{E_*,V}$.

\begin{lemma}[{\cite[Lemma 3.7]{ABKS}}]\label{lem4}
The sheaf of generalized parabolic Lie algebroid connections on $E_*$, namely $\mathcal{C}_{E_*,V}$,
fits in the following short exact sequence of holomorphic vector bundles on $X$:
\begin{equation}\label{Atiya_para_Lie}
0\, \longrightarrow\, {\rm End}_n(E_*)\otimes V^* \, \longrightarrow\, \mathcal{C}_{E_*,V}
\, \stackrel{\sigma}{\longrightarrow}\, {\mathcal O}_X \, \longrightarrow\, 0,
\end{equation}
where ${\rm End}_n(E_*)$ is defined in \eqref{pe2}.
\end{lemma}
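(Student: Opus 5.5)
We prove the lemma by identifying the kernel of $\sigma$ restricted to $\mathcal{C}_{E_*,V}$, and then showing that this restriction is still surjective onto ${\mathcal O}_X$, using explicit local models near the points of $S$.

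First we check that $\mathcal{C}_{E_*,V}$ is an ${\mathcal O}_X$-submodule of $\mathcal{C}_{E,V}$.
\begin{itemize}
\item If $(w_1,D_1)$ and $(w_2,D_2)$ are generalized parabolic Lie algebroid connections and $f,g$ are holomorphic functions, then $(fw_1+gw_2,\, fD_1+gD_2)$ is a generalized Lie algebroid connection.
\item It preserves the subsheaves ${\mathcal E}_{x,i}$, because each summand does.
\item Its residue $\widehat{D}_{x,i}$ is $f(x)\widehat{D}_{1,x,i}+g(x)\widehat{D}_{2,x,i}$. This equals multiplication by $(fw_1+gw_2)(x)\,\psi_x(\alpha^x_i)$, since the condition in Definition \ref{def:genParabolicCon} is linear in $w(x)$.
\end{itemize}
Hence $\sigma$ restricts to a homomorphism $\mathcal{C}_{E_*,V}\longrightarrow {\mathcal O}_X$. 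Moreover, $\mathcal{C}_{E_*,V}$ is a subsheaf of the locally free sheaf $\mathcal{C}_{E,V}$ on a curve, so it is torsion-free and therefore locally free. Once exactness of \eqref{Atiya_para_Lie} is shown, it is a sequence of vector bundles.

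\textbf{Kernel.} A section of $\mathcal{C}_{E,V}$ with $w=0$ is, by \eqref{e15}, an ${\mathcal O}_X$-linear map $\theta\,:\,E\longrightarrow E\otimes V^*$, i.e. a section of ${\rm End}(E)\otimes V^*$. Fix $x\in S$.
\begin{itemize}
\item Since $\phi^*_x=0$, the image of $\phi^*(K_X)$ in $V^*_x$ is zero. Hence ${\mathcal Q}_x=V^*_x$ and $q(x)$ is the identity.
\item Consequently ${\mathcal S}_x=\theta(x)$.
\item By Lemma \ref{lem3}, $\theta$ is quasi-parabolic if and only if $\theta(x)(E^i_x)\subset E^i_x\otimes V^*_x$ for all $i$.
\item The parabolic condition with $w(x)=0$ says that the induced maps $E^i_x/E^{i+1}_x\longrightarrow (E^i_x/E^{i+1}_x)\otimes V^*_x$ vanish. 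Equivalently, $\theta(x)(E^i_x)\subset E^{i+1}_x\otimes V^*_x$.
\end{itemize}
Choosing a local frame of $V^*$, this is exactly the condition that each component of $\theta$ lies in ${\rm End}_n(E_*)$ (see \eqref{pe2}). So the kernel is ${\rm End}_n(E_*)\otimes V^*$.

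\textbf{Surjectivity.} This is the main step. It suffices to produce, locally around every point, a generalized parabolic Lie algebroid connection with $w=1$.
\begin{itemize}
\item Away from $S$ we take $d$ in a local trivialization, namely $D(\sum f_je_j)=\sum e_j\otimes\phi^*(df_j)$.
\item Near $x\in S$, take a coordinate $z$ centered at $x$ on a disc $U$ with $U\cap S=\{x\}$. Choose a holomorphic frame $e_1,\dots,e_r$ of $E|_U$ adapted to the flag, so that each $E^i_x$ is spanned by some of the $e_j(x)$. Let $\beta_j=\alpha^x_i$ when $e_j(x)\in E^i_x\setminus E^{i+1}_x$ (more precisely, when $e_j(x)$ is among the basis vectors spanning a complement of $E^{i+1}_x$ in $E^i_x$).
\item Define
$$D\Big(\sum f_je_j\Big)\,=\,\sum_j e_j\otimes \phi^*(df_j)+\sum_j \beta_j f_j\, e_j\otimes \phi^*_0\!\left(\frac{dz}{z}\right),$$
using the homomorphism in \eqref{e12}. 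This satisfies \eqref{e15} with $w=1$.
\item The zeroth-order part is diagonal in the adapted frame. So ${\mathcal S}_x(e_j(x))=\beta_j\, e_j(x)\otimes\psi_x(1)$, which preserves the flag. Lemma \ref{lem3} then gives the quasi-parabolic property.
\item The induced $\widehat{D}_{x,i}$ is multiplication by $\psi_x(\alpha^x_i)$, as required.
\end{itemize}
The only delicate point is verifying that $\widehat{D}_{x,i}$ is computed via ${\mathcal S}_x$ on representatives. This follows from the construction in \eqref{z3}, since the derivative term is killed by $q$. Thus $\sigma$ is locally surjective, and \eqref{Atiya_para_Lie} is exact.
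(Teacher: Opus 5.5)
Your proof is correct. The paper quotes this lemma from \cite{ABKS} without reproducing a proof, and your argument is the natural one: you identify the kernel using $\mathcal{Q}_x=V^*_x$ at parabolic points, and you get local surjectivity from a local model that is exactly $({\rm Id}_E\otimes\phi_0^*)\circ D_0$, where $D_0=d+{\rm diag}(\beta_j)\,dz/z$ is a local logarithmic connection. This is the same mechanism as the map $\Phi$ in \eqref{k2}.

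One small slip: ``torsion-free subsheaf of a locally free sheaf on a curve, hence locally free'' needs coherence, which you have not established at that point. This is harmless, because your local section with $w=1$ splits the sequence locally, and that gives local freeness of $\mathcal{C}_{E_*,V}$ directly.
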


{}From the construction of $\mathcal{C}_{E_*,V}$ it follows immediately that a parabolic Lie algebroid
connection on $E_*\big\vert_U$ (see Definition \ref{def:paraLieconn}) is precisely a holomorphic
splitting of the short exact sequence in Lemma \ref{lem4} over $U\, \subset\, X$. More precisely, if
$$
\tau\, \, :\,\, {\mathcal O}_U\,\, \longrightarrow\,\, \mathcal{C}_{E_*,V}
$$
is a holomorphic splitting of the short exact sequence in Lemma \ref{lem4} over $U\,
\subset\, X$ (so $\sigma\circ\tau\,=\, {\rm Id}_{{\mathcal O}_U}$), then $\tau(1)$ is a parabolic Lie
algebroid connection on $E_*\big\vert_U$.
Conversely, if $D$ is a parabolic Lie algebroid connection on $E_*\big\vert_U$, then
we have a holomorphic splitting
$$
\tau'\, \, :\,\, {\mathcal O}_U\,\, \longrightarrow\,\, \mathcal{C}_{E_*,V}
$$
of the short exact sequence in Lemma \ref{lem4} over $U\, \subset\, X$ defined by
$\tau'(f)\,=\, f\cdot D$, where $f$ is any holomorphic function on some open subset of $U$.

The short exact sequence in Lemma \ref{lem4} will be called the \textit{Atiyah exact sequence}
for the parabolic vector bundle $E_*$ for the holomorphic Lie algebroid $(V, \, \phi)$.

\section{Connecting Atiyah exact sequences}

Take a holomorphic Lie algebroid $(V,\, \phi)$ on $X$. A necessary condition for a parabolic
vector bundle $E_*$ on
$X$ to admit a quasi-parabolic Lie algebroid connection for $(V,\, \phi)$ is the following:
For all $x\, \in\, S$, the homomorphism of fibers 
$\phi^*_x\, :\, (K_X)_x\, \longrightarrow\, V^*_x$ (see \eqref{e2}) is the zero map (see Proposition
\ref{prop1}). So if $E_*$ admits a quasi-parabolic Lie algebroid connection for $(V,\, \phi)$, then
the condition
\begin{equation}\label{e8}
\phi_x\ \, =\, \ 0
\end{equation}
holds for all $x\, \in\, S$.

Assume that \eqref{e8} holds for $(V,\, \phi)$. Consequently, the anchor map
$\phi\, :\, V\, \longrightarrow\,TX$ uniquely factors as
$$
V \ \longrightarrow\ TX\otimes {\mathcal O}_X(-S)\ \hookrightarrow\ TX.
$$
Let
\begin{equation}\label{e9}
\phi_0\ :\ V \ \longrightarrow\ TX\otimes {\mathcal O}_X(-S)
\end{equation}
be the above map. Note that $\phi_0$ is the dual of the homomorphism in \eqref{e12}.

In this section we will compare the Atiyah exact sequence in Lemma \ref{lem4} for $(V,\, \phi)$
with the corresponding Atiyah exact sequence for the holomorphic Lie algebroid
$(TX\otimes {\mathcal O}_X(-S),\, \iota)$, where
\begin{equation}\label{io}
\iota\ :\ TX\otimes {\mathcal O}_X(-S)\ \hookrightarrow\ TX
\end{equation}
is the natural inclusion map.

For notational convenience, the holomorphic line bundle $TX\otimes {\mathcal O}_X(-S)$ will be
denoted by $\mathbb T$.

Note that this holomorphic Lie algebroid
$({\mathbb T},\, \iota)$ satisfies the condition in \eqref{e8}.

Setting $(V,\, \phi)\ =\ (TX\otimes {\mathcal O}_X(-S),\, \iota)\ =\
(\mathbb T,\, \iota)$ in Lemma \ref{lem4}, we get a
short exact sequence of holomorphic vector bundles on $X$
\begin{equation}\label{k1}
0\, \longrightarrow\, {\rm End}_n(E_*)\otimes K_X\otimes{\mathcal O}_X(S) \, \longrightarrow\,
\mathcal{C}_{E_*,TX\otimes {\mathcal O}_X(-S)}\, =\, \mathcal{C}_{E_*,{\mathbb T}}
\, \stackrel{\sigma^0}{\longrightarrow}\, {\mathcal O}_X \, \longrightarrow\, 0
\end{equation}
(see \eqref{Atiya_para_Lie}). So, a holomorphic section $\delta$ of $\mathcal{C}_{E_*,{\mathbb T}}$ over
an open subset $U\, \subset\, X$ is a holomorphic differential operator
$$
\delta\ :\ E\big\vert_U \ \longrightarrow\ (E\otimes K_X\otimes {\mathcal O}_X(S))\big\vert_U\,=\,
(E\otimes {\mathbb T}^*)\big\vert_U
$$
satisfying the conditions in Definition \ref{def:genParabolicCon}. Consider the composition of homomorphisms
$$
(\text{Id}_E\otimes \phi^*_0)\big\vert_U\circ \delta\ :\ E\big\vert_U \ \longrightarrow\ 
(E\otimes V^*)\big\vert_U,
$$
where $\phi^*_0$ is the dual of the homomorphism $\phi_0$ in \eqref{e9}. It is straightforward to
check that
$$
(\text{Id}_E\otimes \phi^*_0)\big\vert_U\circ \delta\ \in\ H^0(U,\, \mathcal{C}_{E_*,V}),
$$
where $\mathcal{C}_{E_*,V}$ is the vector bundle in \eqref{Atiya_para_Lie}.
Consequently, we get an ${\mathcal O}_X$--linear homomorphism
\begin{equation}\label{k2}
\Phi\ :\ \mathcal{C}_{E_*,{\mathbb T}}\ \longrightarrow\ \mathcal{C}_{E_*,V}
\end{equation}
that sends any holomorphic section $\delta$ of $\mathcal{C}_{E_*,{\mathbb T}}$ over $U\, \subset\, X$ to
the section $(\text{Id}_E\otimes \phi^*_0)\big\vert_U\circ
\delta$ of $\mathcal{C}_{E_*,V}\big\vert_U$ constructed above.

The homomorphism $\Phi$ in \eqref{k2} fits into the following commutative diagram of homomorphisms
\begin{equation}\label{k3}
\begin{matrix}
0 & \longrightarrow & {\rm End}_n(E_*)\otimes K_X\otimes{\mathcal O}_X(S)& \longrightarrow
& \mathcal{C}_{E_*,{\mathbb T}} & \stackrel{\sigma^0}{\longrightarrow} & {\mathcal O}_X
& \longrightarrow & 0\\
&& 
{\rm Id}_{{\rm End}_n(E_*)}\otimes \phi^*_0\Big\downarrow\,\, \text{ }\text{ }\quad\, \qquad
\, \quad && \Phi\Big\downarrow\,\,\,\, && \Vert\\
0 & \longrightarrow & {\rm End}_n(E_*)\otimes V^* & \longrightarrow & \mathcal{C}_{E_*,V}
& \stackrel{\sigma}{\longrightarrow} & {\mathcal O}_X & \longrightarrow & 0
\end{matrix}
\end{equation}
where the top and bottom exact sequences are as in \eqref{k1} and \eqref{Atiya_para_Lie}
respectively and $\phi^*_0$ is the dual of the homomorphism $\phi_0$ in \eqref{e9}.

Consider the bottom exact sequence of holomorphic vector bundles in \eqref{k3}. The
obstruction to its holomorphic splitting is given by a cohomology class
\begin{equation}\label{ze}
\zeta\,\, \in\, \, H^1(X,\, \text{Hom}({\mathcal O}_X,\, {\rm End}_n(E_*)\otimes V^*))
\,\,=\,\, H^1(X,\, {\rm End}_n(E_*)\otimes V^*).
\end{equation}
Similarly, let
\begin{equation}\label{ze2}
\zeta_0\,\, \in\, \, H^1(X,\, {\rm End}_n(E_*)\otimes K_X\otimes{\mathcal O}_X(S))
\end{equation}
be the cohomology class corresponding to the top exact sequence in \eqref{k3}. From the commutative
diagram in \eqref{k3} it follows immediately that
\begin{equation}\label{ze3}
\zeta\ =\ ({\rm Id}_{{\rm End}_n(E_*)}\otimes \phi^*_0)_* (\zeta_0),
\end{equation}
where $\zeta$ (respectively, $\zeta_0$) is the cohomology class in \eqref{ze} (respectively, \eqref{ze2}),
and $$({\rm Id}_{{\rm End}_n(E_*)}\otimes \phi^*_0)_*\ :\ H^1(X,\, {\rm End}_n(E_*)\otimes K_X\otimes
{\mathcal O}_X(S))\ \longrightarrow\ H^1(X,\, {\rm End}_n(E_*)\otimes V^*)$$ is the homomorphism of
cohomologies induced by the homomorphism ${\rm Id}_{{\rm End}_n(E_*)}\otimes \phi^*_0$ in \eqref{k3}.

\section{Logarithmically non-split Lie algebroids and connections}

\begin{definition}\label{def-1}
Let $(V,\, \phi)$ be a holomorphic Lie algebroid on $X$ such that $\phi_x\, =\, 0$
for all $x\, \in\, S$ (see \eqref{e8}). It is
called \textit{logarithmically split} if there is a holomorphic ${\mathcal O}_X$--linear homomorphism
\begin{equation}\label{s}
\sigma\ :\ TX\otimes{\mathcal O}_X(-S)\ \longrightarrow\ V
\end{equation}
such that $\phi_0\circ\sigma\,=\, {\rm Id}_{TX\otimes{\mathcal O}_X(-S)}$, where $\phi_0$
is the homomorphism in \eqref{e9} given by $\phi$. A holomorphic Lie algebroid
$(V,\, \phi)$ is called \textit{logarithmically non-split} if it is not logarithmically split.
\end{definition}

\begin{theorem}\label{thm1}
Let $(V,\, \phi)$ be a holomorphic Lie algebroid on $X$ such that $\phi_x\, =\, 0$
for all $x\, \in\, S$ (as in \eqref{e8}), and $(V,\,\phi)$ is logarithmically non-split (see Definition
\ref{def-1}). Then any indecomposable parabolic vector bundle $E_*$ on $X$ (see Definition
\ref{def2}), with parabolic divisor $S$, admits a parabolic Lie algebroid connection for $(V,\,\phi)$.
\end{theorem}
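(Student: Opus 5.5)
We have to show that the class $\zeta$ in \eqref{ze} vanishes. By \eqref{ze3}, $\zeta = ({\rm Id}\otimes\phi^*_0)_*(\zeta_0)$. We will use Serre duality. The dual of $H^1(X,\, {\rm End}_n(E_*)\otimes V^*)$ is $H^0(X,\, {\rm End}_n(E_*)^*\otimes V\otimes K_X)$. The trace pairing identifies ${\rm End}_n(E_*)^*$ with ${\rm End}_P(E_*)\otimes{\mathcal O}_X(S)$; this is the standard parabolic duality between nilpotent and parabolic endomorphisms, and it is the identification used for the classical Atiyah class of parabolic bundles. So the dual space is $H^0(X,\, {\rm End}_P(E_*)\otimes V\otimes K_X\otimes{\mathcal O}_X(S))$. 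Similarly, the dual of the group containing $\zeta_0$ is $H^0(X,\, {\rm End}_P(E_*))$. Under these dualities the map $({\rm Id}\otimes\phi^*_0)_*$ is dual to
$$
\Psi\ :\ H^0(X,\, {\rm End}_P(E_*)\otimes V\otimes K_X\otimes{\mathcal O}_X(S))\ \longrightarrow\ H^0(X,\, {\rm End}_P(E_*)),
$$
which is induced by ${\rm Id}\otimes\phi_0$, where $\phi_0\,:\, V\otimes K_X\otimes{\mathcal O}_X(S)\,\to\,{\mathcal O}_X$. Hence $\zeta=0$ if and only if $\langle \zeta_0,\, \Psi(\theta)\rangle = 0$ for every $\theta$ in the source of $\Psi$.

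Next we need the pairing of $\zeta_0$ with an arbitrary $\tau\in H^0(X,\,{\rm End}_P(E_*))$. This is the classical parabolic Atiyah computation: $\langle\zeta_0,\,\tau\rangle$ is a nonzero constant multiple of $\sum$ (trace of $\tau$ against the residue data), and it equals $c\cdot{\rm trace}$-type expressions. More precisely, $\zeta_0$ paired with ${\rm Id}_E$ gives a nonzero multiple of ${\rm par\text{-}deg}(E_*)$, and $\zeta_0$ paired with a nilpotent parabolic endomorphism gives $0$. The nilpotent case holds because the pairing is computed by a trace of products with $\tau$, and nilpotent elements are killed by it. We would cite or reprove this, and it is the technically delicate step.

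Now use indecomposability. By Lemma \ref{lem0}, $\Psi(\theta)=\lambda\,{\rm Id}_E+N$ with $N$ nilpotent and $\lambda=\frac{1}{r}{\rm trace}(\Psi(\theta))$. Therefore $\langle\zeta_0,\Psi(\theta)\rangle$ is a multiple of $\lambda\cdot{\rm par\text{-}deg}(E_*)$. It suffices to show $\lambda=0$ for all $\theta$. Note that ${\rm trace}(\Psi(\theta))=\phi_0(({\rm trace}\otimes{\rm Id})(\theta))$, where $({\rm trace}\otimes{\rm Id})(\theta)\in H^0(X,\, V\otimes K_X\otimes{\mathcal O}_X(S))={\rm Hom}({\mathbb T},\,V)$. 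Call this $\beta$. Then $\phi_0\circ\beta\in H^0(X,{\mathcal O}_X)={\mathbb C}$ is a constant $c$. If $c\neq 0$, then $c^{-1}\beta$ splits $\phi_0$, so $(V,\phi)$ would be logarithmically split, contradicting the hypothesis. Hence $c=0$, so $\lambda=0$, so $\zeta=0$, and a splitting of \eqref{Atiya_para_Lie} exists. This splitting is a parabolic Lie algebroid connection.

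The main obstacle is the duality bookkeeping: identifying ${\rm End}_n(E_*)^*\cong{\rm End}_P(E_*)\otimes{\mathcal O}_X(S)$ compatibly with $\Psi$, and checking that $\zeta_0$ annihilates nilpotent parabolic endomorphisms. For the latter, the first thing to try is the \v{C}ech description of $\zeta_0$ using local parabolic connections, computing the Serre pairing as a sum of residues of ${\rm trace}(\tau\cdot(D_i-D_j))$. For nilpotent $\tau$ these residues vanish, because near each $x\in S$ the residue of the local connection preserves the flag and $\tau$ lowers it.
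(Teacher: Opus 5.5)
Your reduction is the paper's. You use Serre duality with ${\rm End}_n(E_*)^*\cong{\rm End}_P(E_*)\otimes\mathcal{O}_X(S)$, and your $\Psi$ is the paper's $\phi'_*$. You then apply Lemma \ref{lem0} to write $\Psi(\theta)=\lambda\,{\rm Id}_E+N$. Your final step is also correct and matches the paper: ${\rm trace}(\Psi(\theta))=\phi_0\circ\beta$ is a constant, and it must vanish since otherwise a multiple of $\beta$ splits $\phi_0$.

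The gap is exactly the step you flag as delicate: that $\zeta_0$ kills every nilpotent $N\in H^0(X,\,{\rm End}_P(E_*))$. This is the heart of the proof, and neither justification you give works.
\begin{itemize}
\item ``The pairing is a trace of products with $\tau$, and nilpotent elements are killed by it'' is false. Away from $S$ one has ${\rm End}_n(E_*)={\rm End}(E)$. If $N$ is the $2\times 2$ matrix with a single $1$ in position $(1,2)$ and $A$ is its transpose, then ${\rm trace}(AN)=1$.
\item The residue sketch is local at the points of $S$, and no local argument can detect this class. For local generalized parabolic connections $D_i,\,D_j$, the residue of $D_i-D_j$ at $x\in S$ lies in ${\rm End}_n$, while $\tau(x)$ preserves the flag. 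So ${\rm trace}(\tau\cdot(D_i-D_j))$ is holomorphic at $x$ for \emph{every} $\tau\in{\rm End}_P(E_*)$; this is just the statement that the trace pairing lands in $K_X$. It applies equally to $\tau={\rm Id}_E$, whose pairing with $\zeta_0$ is a nonzero constant times ${\rm pardeg}(E_*)$.
\item Your premise that a nilpotent parabolic $\tau$ ``lowers'' the flag is also wrong. If $\ell_x=1$, then $\tau(x)$ can be any nilpotent endomorphism of $E_x$. In general it only induces nilpotent maps on the graded pieces $E^i_x/E^{i+1}_x$.
\end{itemize}
The vanishing is a global statement. Even for ordinary holomorphic connections, with no parabolic points at all, it is the key step in Atiyah's criterion, and it needs a global input.

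The missing input is the filtration argument used in the paper.
\begin{itemize}
\item For nilpotent $\tau_0$, let $E_j\subset E$ be the subbundle generated by $\ker(\tau_0^j)$, so that $\tau_0(E_j)\subset E_{j+1}$.
\item Locally there are generalized parabolic connections that also preserve every $E_j$. Near $x\in S$, choose a basis of $E_x$ adapted to both the parabolic flag and $\{(E_j)_x\}$, extend it to a frame compatible with the $E_j$, and take the connection whose residue is diagonal with entries the weights.
\item Hence \eqref{k1} contains the subsequence with kernel ${\rm End}^f_n(E_*)\otimes K_X\otimes\mathcal{O}_X(S)$, and $\zeta_0=\iota^0_*(\zeta^f_0)$.
\item Then $\langle\zeta_0,\tau_0\rangle=\langle\zeta^f_0,\,(\iota^0)^*\tau_0\rangle$. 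Here $(\iota^0)^*\tau_0$ is the functional $A\mapsto{\rm trace}(A\tau_0)$ on ${\rm End}^f_n(E_*)$. It vanishes identically because $A\tau_0$ maps each $E_j$ into $E_{j+1}$ and is therefore nilpotent.
\end{itemize}
In your \v{C}ech language: if all local $D_i$ preserve $\{E_j\}$, then ${\rm trace}(\tau_0\cdot(D_i-D_j))\equiv 0$ on every overlap, so the cocycle is zero. With this step supplied, the rest of your argument goes through.
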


\begin{proof}
To prove the theorem it suffices to show that the cohomology class $\zeta$ in \eqref{ze} vanishes.

Consider ${\rm End}_P(E_*)$ and $\text{End}_n(E_*)$ defined in \eqref{pe} and \eqref{pe2} respectively.
It can be shown (see, for instance, \cite{Yok}) that
\begin{equation}\label{k4}
{\rm End}_n(E_*)^*\ =\ {\rm End}_P(E_*)\otimes {\mathcal O}_X(S).
\end{equation}
To see this, consider the natural nondegenerate pairing $\text{End}(E)\otimes\text{End}(E)\,
\longrightarrow\, {\mathcal O}_X$ defined by $A\otimes B\, \longmapsto\, \text{trace}(AB)$. It
restricts to a nondegenerate pairing $$\text{End}_n(E_*)\otimes\text{End}_P(E_*)\
\longrightarrow\ {\mathcal O}_X(-S).$$ From this \eqref{k4} follows immediately.

Consider $\zeta$ and $\zeta_0$ constructed in \eqref{ze} and \eqref{ze2} respectively. 
In view of \eqref{k4}, using Serre duality,
\begin{equation}\label{k5}
\zeta\,\, \in\, \, H^0(X,\, {\rm End}_P(E_*)\otimes{\mathcal O}_X(S) \otimes V\otimes K_X)^*,
\end{equation}
\begin{equation}\label{k6}
\zeta_0\,\, \in\, \, H^0(X,\, {\rm End}_P(E_*))^*.
\end{equation}
The homomorphism
\begin{equation}\label{k5a}
H^0(X,\, {\rm End}_P(E_*)\otimes{\mathcal O}_X(S) \otimes V\otimes K_X)\ \longrightarrow\
{\mathbb C}
\end{equation}
given by $\zeta$ in \eqref{k5} will also be denoted by $\zeta$. Similarly, the homomorphism
\begin{equation}\label{k6a}
H^0(X,\, {\rm End}_P(E_*))\ \longrightarrow\ {\mathbb C}
\end{equation}
given by $\zeta_0$ in \eqref{k6} will also be denoted by $\zeta_0$. 

Consider the homomorphism $\phi_0$ in \eqref{e9}. Note that
$$
\phi_0\ \in\ H^0(X,\, \text{Hom}({\mathcal O}_X(S)\otimes V\otimes K_X,\, {\mathcal O}_X)).
$$
Thus, it produces a homomorphism
\begin{equation}\label{hs}
\phi'\ :=\ {\rm Id}_{{\rm End}_P(E_*)}\otimes\phi_0\ :\ {\rm End}_P(E_*)
\otimes{\mathcal O}_X(S) \otimes V\otimes K_X\ \longrightarrow\ {\rm End}_P(E_*);
\end{equation}
so $\phi'$ sends any $a\otimes b\, \in\, ({\rm End}_P(E_*)\otimes{\mathcal O}_X(S) \otimes V\otimes K_X)_x$,
where $x\, \in\, X$, $a\, \in\, {\rm End}_P(E_*)_x$ and $b\, \in\, ({\mathcal O}_X(S) \otimes V\otimes
K_X)_x$, to $(\phi_0)_x(b)\cdot a$. Let
\begin{equation}\label{k7}
\phi'_*\ :\ H^0(X,\, {\rm End}_P(E_*)\otimes{\mathcal O}_X(S) \otimes V\otimes K_X)\ \longrightarrow\
H^0(X,\, {\rm End}_P(E_*))
\end{equation}
be the homomorphism of cohomologies induced by the homomorphism $\phi'$ in \eqref{hs}. An
immediate consequence of \eqref{ze3} is that the following diagram is commutative: 
\begin{equation}\label{k8}
\begin{matrix}
H^0(X,\, {\rm End}_P(E_*)\otimes{\mathcal O}_X(S) \otimes V\otimes K_X)& \xrightarrow{\,\,\,\zeta\,\,\,}
& {\mathbb C}\\
\phi'_*\Big\downarrow\,\,\,\,\, && \Vert\\
H^0(X,\, {\rm End}_P(E_*)) & \xrightarrow{\,\,\,\zeta_0\,\,\,} & {\mathbb C}
\end{matrix}
\end{equation}
where $\phi'_*$, $\zeta_0$ and $\zeta$ are as in \eqref{k7}, \eqref{k6} and \eqref{k5} respectively
(see also \eqref{k6a} and \eqref{k5a}).

Now we will analyze the homomorphism $\zeta_0$ in \eqref{k8}. Take any
\begin{equation}\label{k9a}
\tau\ \in\ H^0(X,\, {\rm End}_P(E_*))\ \subset\ H^0(X,\, {\rm End}(E))
\end{equation}
(see \eqref{pe}). Since we assumed that $E_*$ is indecomposable, from Lemma \ref{lem0} we know that the endomorphism
\begin{equation}\label{k9}
\tau_0\ :=\ \tau- \frac{1}{\rm rank(E)}{\rm trace(\tau)} \cdot {\rm Id}_{E_*}
\end{equation}
is actually a nilpotent endomorphism of $E$. We will show that
\begin{equation}\label{k10}
\zeta_0(\tau_0)\ =\ 0,
\end{equation}
where $\zeta_0$ is the homomorphism in \eqref{k8}.

To prove \eqref{k10}, for $1\, \leq\, i\, \leq\, r\,=\,{\rm rank}(E)$, let $E_i\, \subset\, E$ be the
subbundle of $E$
generated by $\text{kernel}(\tau^i_0)$. In other words, $E_i$ is the inverse image of the torsion part
of $E/\tau^i_0(E)$ under the quotient map $E\, \longrightarrow\, E/\tau^i_0(E)$. So we have a filtration
of subbundles of $E$
\begin{equation}\label{k11}
0\,=\, E_r\, = \, \cdots \, =\, E_{k}\, \subsetneq\, E_{k-1}\,
\subsetneq \, \cdots\, \subsetneq\, E_1\, \subsetneq\, E_0\, :=\, E
\end{equation}
for some $0<k\le r$. The homomorphism $\tau_0$ is evidently nilpotent with respect to the filtration in \eqref{k11}, meaning
\begin{equation}\label{k11a}
\tau_0(E_j)\ \subset\ E_{j+1}
\end{equation}
for all $0\, \leq\, j\, \leq\, r-1$.
Note that $\tau_0\, \in\, H^0(X,\, {\rm End}_P(E_*))$ (see \eqref{k9a} and \eqref{k9}).

Consider $\text{End}_n(E_*)$ in \eqref{pe2}. Let
\begin{equation}\label{k12}
\text{End}^f_n(E_*)\,\ \subset\,\ \text{End}_n(E_*)
\end{equation}
be the subbundle given by the sheaf of endomorphisms of $E$ that lies in
$\text{End}_n(E_*)$ and also preserves the filtration in \eqref{k11}. From
the fact that $\tau_0\, \in\, H^0(X,\, {\rm End}_P(E_*))$ it follows that $\text{End}^f_n(E_*)$ is
actually a subbundle of $\text{End}_n(E_*)$.

Consider the vector bundle $\mathcal{C}_{E_*,{\mathbb T}}$ in \eqref{k1} given by the sheaf of generalized 
parabolic Lie algebroid connections on $E_*$. Let
$$
\mathcal{C}^f_{E_*,{\mathbb T}}\,\ \subset\, \ \mathcal{C}_{E_*,{\mathbb T}}
$$
be the subbundle given by the sheaf of generalized parabolic Lie algebroid connections that preserve
the filtration in \eqref{k11}. Again from the fact that $\tau_0\, \in\, H^0(X,\, {\rm End}_P(E_*))$
it follows that $\mathcal{C}^f_{E_*,{\mathbb T}}$ is a subbundle of $\mathcal{C}_{E_*,{\mathbb T}}$.
So from \eqref{k1} we have the short exact sequence of holomorphic vector bundles on $X$
$$
0\, \longrightarrow\, {\rm End}^f_n(E_*)\otimes K_X\otimes{\mathcal O}_X(S) \, \longrightarrow\,
\mathcal{C}^f_{E_*,{\mathbb T}}
\, \longrightarrow\, {\mathcal O}_X \, \longrightarrow\, 0
$$
which fits in the following commutative diagram
\begin{equation}\label{k13}
\begin{matrix}
0 & \longrightarrow & {\rm End}^f_n(E_*)\otimes K_X\otimes{\mathcal O}_X(S)& \longrightarrow &
\mathcal{C}^f_{E_*,{\mathbb T}}& \longrightarrow & {\mathcal O}_X & \longrightarrow & 0\\
&&\,\,\, \Big\downarrow\iota^0 &&\Big\downarrow &&\,\,\,\Big\downarrow\iota'\\
0 & \longrightarrow & {\rm End}_n(E_*)\otimes K_X\otimes{\mathcal O}_X(S)& \longrightarrow &
\mathcal{C}_{E_*,{\mathbb T}}& \longrightarrow & {\mathcal O}_X & \longrightarrow & 0
\end{matrix}
\end{equation}
where $\iota^0$ and $\iota'$ are the natural inclusion maps. Let
\begin{equation}\label{k14}
\zeta^f_0\,\, \in\, \, H^1(X,\, \text{Hom}({\mathcal O}_X,\, {\rm End}^f_n(E_*)\otimes K_X\otimes
{\mathcal O}_X(S)))\,\,=\,\, H^1(X,\, {\rm End}^f_n(E_*)\otimes K_X\otimes{\mathcal O}_X(S))
\end{equation}
be the cohomology class corresponding to the top exact sequence in \eqref{k13}. From \eqref{k13}
it follows immediately that
\begin{equation}\label{k15}
\zeta_0\,\ =\, \ \iota^0_*(\zeta^f_0),
\end{equation}
where $\zeta_0$ and $\zeta^f_0$ are as in \eqref{ze2} and \eqref{k14} respectively, and
$$
\iota^0_*\ :\ H^1(X,\, {\rm End}^f_n(E_*)\otimes K_X\otimes{\mathcal O}_X(S))\
\longrightarrow\ H^1(X,\, {\rm End}_n(E_*)\otimes K_X\otimes{\mathcal O}_X(S))
$$
is the homomorphism of cohomologies given by $\iota^0$ in \eqref{k13}.

{}From \eqref{k11a} and \eqref{k15} it follows that \eqref{k10} holds. This derivation uses
the fact that for endomorphisms $A,\, B\,\in\, {\rm End}({\mathbb C}^r)$, such that $A$
preserves a filtration of ${\mathbb C}^r$ and $B$ is nilpotent with respect to that filtration,
we have ${\rm trace}(A\circ B)\,=\, 0$, because $A\circ B$ is nilpotent.

Now, by linearity of $\zeta_0$, \eqref{k9} and \eqref{k10}, we have
$$\zeta_0(\tau)\, =\, \zeta_0(\tau_0)+\frac{1}{\rm rank(E)} {\rm trace(\tau)}\zeta_0(\rm Id_{E_*}) = \frac{\zeta_0(\rm Id_{E_*})}{\rm rank(E)} {\rm trace(\tau)}$$
for all $\tau\ \in\ H^0(X,\, {\rm End}_P(E_*))$. Set $\beta\, := \, \frac{\zeta_0(\rm Id_{E_*})}{\rm rank(E)}
\,\in\, \mathbb{C}$. Then $\zeta_0=\beta \cdot \rm trace$. Thus, from \ref{k8},
\begin{equation} \label{k15c}
\zeta\ =\ \beta \cdot \rm trace \circ \phi'_*.
\end{equation}
In view of this, and the fact that the trace and the contraction with $\phi_0$ evidently commute, the
following diagram is commutative:
\begin{equation}\label{k15d}
\begin{matrix}
H^0(X,\, {\rm End}_P(E_*)\otimes{\mathcal O}_X(S) \otimes V\otimes K_X)& \xrightarrow{\,\,\,\,\,\zeta\,\,\,\,\,}
& {\mathbb C}\\
\rm trace\Big\downarrow\,\,\,\,\, && \,\,\,\, \Big\uparrow \beta \\
H^0(X,\, {\mathcal O}_X(S) \otimes V\otimes K_X) & \xrightarrow{\,\,\,(\phi_0)_*\,\,\,} & {\mathbb C}
\end{matrix}
\end{equation}
where $(\phi_0)_*$ is the contraction map of elements in $H^0(X,\, {\mathcal O}_X(S) \otimes V\otimes K_X)$ with
$\phi_0\,\in\, H^0(X,\, \text{Hom}(V,\, TX\otimes{\mathcal O}_X(-S))) \, = \, H^0(X,\, ({\mathcal O}_X(S) \otimes V\otimes K_X)^*)$.

To prove the theorem by contradiction, assume that the homomorphism $\zeta$ in \eqref{k8} is nonzero. Then by \eqref{k15d}, the map $(\phi_0)_*$ must be nonzero, so there must exist a section
$$
\varpi\,\ \in\,\ H^0(X,\, {\mathcal O}_X(S) \otimes V\otimes K_X)
$$
such that $(\phi_0)_*(\varpi) \ \, \not=\,\ 0$. Note that $\varpi$ is also an homomorphism $\varpi\, :\, TX\otimes {\mathcal O}_X(-S)\,
\longrightarrow\, V$, and considering $\varpi$ as such,
\begin{equation}\label{k21}
\phi_0\circ\varpi\ =\ (\phi_0)_* (\varpi)\cdot {\rm Id}_{TX\otimes {\mathcal O}_X(-S)},
\end{equation}
where $\phi_0$ is the homomorphism in \eqref{e9}. Now from \eqref{k21} and the fact that $(\phi_0)_* (\varpi)\ne 0$ it follows that the homomorphism
$$
\sigma\ :=\ \frac{1}{(\phi_0)_* (\varpi)}\cdot \varpi \ :\ TX\otimes {\mathcal O}_X(-S)\
\longrightarrow\ V$$
satisfies the condition $\phi_0\circ\sigma\,=\, {\rm Id}_{TX\otimes{\mathcal O}_X(-S)}$.
Thus the holomorphic Lie algebroid $(V,\,\phi)$ is logarithmically split (see Definition \ref{def-1}).
But $(V,\,\phi)$ is given to be logarithmically non-split. In view of this contradiction, we
conclude that the homomorphism $\zeta$ in \eqref{k8} vanishes identically.
This completes the proof.
\end{proof}

\begin{corollary}\label{cor1}
Let $(V,\, \phi)$ be a holomorphic Lie algebroid on $X$ such that $\phi_x\, =\, 0$ for all
$x\, \in\, S$ (as in \eqref{e8}), and $(V,\,\phi)$ is logarithmically non-split (see Definition
\ref{def-1}). Then any parabolic vector bundle $E_*$ on $X$ with parabolic divisor $S$ admits a
parabolic Lie algebroid connection for $(V,\, \phi)$.
\end{corollary}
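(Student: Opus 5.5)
Write $E_*$ as a direct sum $E_* = \bigoplus_{j=1}^{n} E^{(j)}_*$ of indecomposable parabolic vector bundles. Such a decomposition exists by induction on the rank: if $E_*$ is decomposable it splits as $A_*\oplus B_*$ with both summands of smaller positive rank, as in Definition \ref{def2}. Each summand $E^{(j)}_*$ is a parabolic bundle with parabolic divisor contained in $S$. In fact the divisor is all of $S$, since every point of $S$ carries the induced filtration on the summand, with at least one weight because the rank is positive. So Theorem \ref{thm1} applies to each $E^{(j)}_*$ and gives a parabolic Lie algebroid connection $D_j$ on $E^{(j)}_*$ for $(V,\,\phi)$.

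Next, I would set $D := \bigoplus_j D_j : E = \bigoplus_j E^{(j)} \longrightarrow \bigoplus_j E^{(j)}\otimes V^* = E\otimes V^*$. The Leibniz identity \eqref{e-4} holds for $D$ because it holds on each summand. It then remains to check the conditions of Definition \ref{def:parabolicConnection} and Definition \ref{def:paraLieconn}, and this is the only step that needs care.

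For a direct sum of parabolic bundles, the filtration at $x\in S$ is $E^i_x=\bigoplus_j (E^{(j)}_x\cap E^i_x)$, and each summand's filtration is indexed by the weights it actually carries. Consequently the subsheaf ${\mathcal E}_{x,i}$ of \eqref{e5} is the direct sum of the corresponding subsheaves ${\mathcal E}^{(j)}_{x,i(j)}$, where $i(j)$ is the index of the smallest weight of $E^{(j)}_*$ at $x$ that is $\ge \alpha^x_i$ (or $E^{(j)}(-x)$ if there is none). Each $D_j$ preserves these subsheaves, so \eqref{e7} holds for $D$.

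Likewise, $E^i_x/E^{i+1}_x$ is the direct sum of those graded pieces of the summands that carry weight exactly $\alpha^x_i$, and $\widehat D_{x,i}$ is the direct sum of the corresponding maps for the $D_j$. Each of these acts by $v\mapsto v\otimes\psi_x(\alpha^x_i)$, so $D$ satisfies Definition \ref{def:paraLieconn}.

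Alternatively, and more cleanly, one can argue with the Atiyah sequence. The sheaf ${\rm End}_n(E_*)$ contains $\bigoplus_j {\rm End}_n(E^{(j)}_*)$. The obstruction class $\zeta$ in \eqref{ze} is the image of the tuple of obstruction classes of the summands, and each of these vanishes by Theorem \ref{thm1}. Hence $\zeta=0$, and the Atiyah sequence \eqref{Atiya_para_Lie} for $E_*$ splits, which gives the required connection. I expect the bookkeeping of the weight indices in the direct sum to be the main (though routine) obstacle.
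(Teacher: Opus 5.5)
Your proposal is correct and follows the paper's proof: decompose $E_*$ into indecomposable summands, apply Theorem \ref{thm1} to each summand, and take the direct sum of the resulting connections. The paper only calls the direct-sum step ``evident.'' Your bookkeeping of the filtrations, of the subsheaves ${\mathcal E}_{x,i}$ and of the graded pieces $E^i_x/E^{i+1}_x$ correctly fills in that remark, and so does your alternative argument via the Atiyah sequence.
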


\begin{proof}
Write $E_*$ as a direct sum of indecomposable parabolic vector bundles
$$
E_*\ =\ \bigoplus_{j=1}^d V^j_*.
$$
By Theorem \ref{thm1}, each indecomposable parabolic vector bundle $V^j_*$ admits a parabolic Lie
algebroid connection $D^j$ for $(V,\, \phi)$. Now $\bigoplus_{j=1}^d D_j$ is evidently
a parabolic Lie algebroid connection on $E_*$ for $(V,\, \phi)$.
\end{proof}

\section{Criterion for parabolic Lie algebroid connections}

\begin{lemma}
\label{lemma:composition}
Let $(V,\, \phi)$ and $(V',\, \phi')$ be holomorphic Lie algebroids on $X$
such that $\phi_x\,=\,0\,= \,\phi'_x$ for all $x\,\in\, S$. Let $\rho\, :\, V\, \longrightarrow\, V'$ be
holomorphic a Lie algebroid map. Let $E_*$ be a parabolic vector bundle and $D'\, : \, E\,\longrightarrow\,
E\otimes (V')^*$ a parabolic $(V',\, \phi')$--Lie algebroid connection on $E_*$. Then
$$D\ :=\ ({\rm Id}_E\otimes \rho^*) \circ D'\ : \ E\ \longrightarrow\ E\otimes V^*$$
is a parabolic $(V,\, \phi)$--Lie algebroid connection on $E*$.
\end{lemma}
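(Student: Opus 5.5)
The plan is to verify the three defining properties in turn: the Leibniz rule, preservation of the subsheaves ${\mathcal E}_{x,i}$, and the residue condition of Definition \ref{def:paraLieconn}. Throughout, a Lie algebroid map $\rho$ is understood to satisfy $\phi'\circ\rho\,=\,\phi$. Dualizing gives $\rho^*\circ(\phi')^*\,=\,\phi^*$ as maps $K_X\longrightarrow V^*$. Since both anchors vanish on $S$, the same identity holds for the factored maps: $\rho^*\circ(\phi'_0)^*\,=\,\phi^*_0$ as maps $K_X\otimes{\mathcal O}_X(S)\longrightarrow V^*$. This follows because both sides agree on the subsheaf $K_X$, and the factorization \eqref{e12} is unique since ${\mathcal O}_X(S)/{\mathcal O}_X$ is torsion while $V^*$ is torsion-free.

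\textbf{Leibniz rule.} For local $f$ and $s$ we have
$$D(fs)\,=\,({\rm Id}_E\otimes\rho^*)\bigl(fD'(s)+s\otimes(\phi')^*(df)\bigr)\,=\,fD(s)+s\otimes\phi^*(df),$$
so $D$ is a holomorphic $(V,\phi)$--Lie algebroid connection.

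\textbf{Quasi-parabolic condition.} Since $D'({\mathcal E}_{x,i})\subset{\mathcal E}_{x,i}\otimes (V')^*$ and ${\rm Id}\otimes\rho^*$ maps ${\mathcal E}_{x,i}\otimes(V')^*$ into ${\mathcal E}_{x,i}\otimes V^*$, condition \eqref{e7} holds for $D$.

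\textbf{Residue condition.} Because $\rho^*((\phi')^*(K_X))\subset\phi^*(K_X)$, the map $\rho^*$ descends to a map $\bar\rho\,:\,{\mathcal Q}'\longrightarrow{\mathcal Q}$ with $q\circ\rho^*\,=\,\bar\rho\circ q'$. Following the construction \eqref{cd2}--\eqref{z2}, the induced maps on $E^i_x/E^{i+1}_x$ satisfy $\widehat D\,=\,({\rm Id}\otimes\rho^*)\circ\widehat{D'}$. Hence
$$\widehat D_{x,i}\,=\,({\rm Id}\otimes\bar\rho_x)\circ\widehat{D'}_{x,i}.$$
By hypothesis $\widehat{D'}_{x,i}(v)\,=\,v\otimes\psi'_x(\alpha^x_i)$, so $\widehat D_{x,i}(v)\,=\,v\otimes\bar\rho_x(\psi'_x(\alpha^x_i))$. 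It remains to check that $\bar\rho_x\circ\psi'_x\,=\,\psi_x$. This follows from the definition \eqref{e13}:
$$\bar\rho_x\circ q'(x)\circ(\phi'_0)^*(x)\,=\,q(x)\circ\rho^*_x\circ(\phi'_0)^*(x)\,=\,q(x)\circ\phi^*_0(x)\,=\,\psi_x.$$

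The main point requiring care is this last compatibility at points of $S$. Here the identity $\rho^*\circ(\phi'_0)^*\,=\,\phi_0^*$ must be used at the level of fibers, not just for $K_X$, since $\phi^*$ itself vanishes at $x$; the uniqueness argument in the first step is what justifies it. The remaining steps are formal consequences of functoriality of the constructions under ${\rm Id}_E\otimes\rho^*$.
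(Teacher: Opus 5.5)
Your proof is correct and follows the paper's argument: the Leibniz rule comes from $\phi'\circ\rho=\phi$, and the residue condition comes from $\widehat{D}_{x,i}=({\rm Id}\otimes\widehat\rho_x)\circ\widehat{D}'_{x,i}$ together with $\psi_x=\widehat\rho_x\circ\psi'_x$. Your uniqueness-of-factorization argument for $\rho^*\circ(\phi'_0)^*=\phi^*_0$ supplies the ``direct computation'' that the paper leaves implicit. The only deviation is that you check the quasi-parabolic condition \eqref{e7} directly from the naturality of ${\rm Id}_E\otimes\rho^*$, whereas the paper deduces it from ${\mathcal S}_x=({\rm Id}\otimes\widehat\rho_x)\circ{\mathcal S}'_x$ via Proposition \ref{prop1}. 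Your route is slightly more economical.
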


\begin{proof}
Let $v$ (respectively, $s$) be a locally defined holomorphic section of $V$ (respectively $E$), and
let $f$ be a locally defined holomorphic function on $X$. Then, since $D'$ is a $(V', \, \phi')$--Lie algebroid
connection,
$$D_v(fs) \, = \, D'_{\rho(v)}(fs) \, = \, f D'_{\rho(v)}(s)+ \phi'(\rho(v))(f) s \, = \, f D_v(s) + \phi(v)(f) s,$$
so $D$ is a $(V,\, \phi)$--Lie algebroid connection. Let us verify that $D$ is a parabolic Lie algebroid connection.

Following \eqref{e2}, let $\mathcal{Q}\,=\,V^*/\phi^*(K_X)$ and $\mathcal{Q}'
\,=\,(V')^*/(\phi')^*(K_X)$. For each $x\in S$, let ${\mathcal S}_x \, : \, E_x\longrightarrow E_x\otimes
\mathcal{Q}_x$ and ${\mathcal S}_x' \, : \, E_x\longrightarrow E_x\otimes \mathcal{Q}_x'$ be the linear
maps defined by \eqref{e4} for $D$ and $D'$ respectively. Observe that $\phi^*\,=\,\rho^* \circ (\phi')^*$.
Thus, $\rho_x^*\, :\, (V')_x^* \longrightarrow V_x^*$ maps $(\phi')_x^*(K_X)_x$ to
$\rho_x^*\left((\phi')_x^*(K_X)_x\right)\, =\, \phi_x^*(K_X)_x$ and, therefore, it induces a map
$$\widehat{\rho}_x \, : \, (V')_x^*/(\phi')_x^*(K_X)_x\,=\,\mathcal{Q}_x'
\,\longrightarrow\, V_x^*/\phi_x^*(K_X)_x\,=\,\mathcal{Q}_x.$$ 
{}From the construction of the maps ${\mathcal S}_x$, ${\mathcal S}_x'$ and $\widehat{\rho}_x$ it follows
immediately that
\begin{equation}
\label{eq:composition1}
{\mathcal S}_x \ = \ ({\rm Id}_{E_x} \otimes \widehat{\rho}_x)\circ {\mathcal S}_x'.
\end{equation}
By Proposition \ref{prop1}, since $D'$ is parabolic, it follows that ${\mathcal S}_x'$ preserves the parabolic
filtration of $E_x$. Then, by \eqref{eq:composition1}, the homomorphism ${\mathcal S}_x$ also preserves the
parabolic structure, so $D$ is quasi-parabolic by Proposition \ref{prop1}. Let $$\widehat{D}_{x,i}\,\,:\,\,
E^i_x/E^{i+1}_x \,\, \longrightarrow\,\, (E^i_x/E^{i+1}_x)\otimes \mathcal{Q}_x$$ and
$\widehat{D}_{x,i}'\,\,:\,\, E^i_x/E^{i+1}_x \,\, \longrightarrow\,\, (E^i_x/E^{i+1}_x)\otimes \mathcal{Q}_x'$
be the maps described by \eqref{z2} for $D$ and $D'$ respectively. Then, by \eqref{eq:composition1},
\begin{equation}
\label{eq:composition2}
\widehat{D}_{x,i}\ =\ ({\rm Id_{E_x^i/E_x^{i+1}}}\otimes\widehat{\rho}_x) \circ \widehat{D}_{x,i}'.
\end{equation}
Let $\psi_x\, : \, \mathbb{C} \,\longrightarrow \,\mathcal{Q}_x$ and $\psi'_x\, : \, \mathbb{C} \,
\longrightarrow\, \mathcal{Q}_x'$ be the maps given by \eqref{e13} for $(V,\, \phi)$ and $(V',\, \phi')$
respectively. A direct computation shows that
\begin{equation}
\label{eq:composition3}
\psi_x \ = \ \widehat{\rho}_x \circ \psi_x'.
\end{equation}
Recall that $D'$ is a parabolic $(V',\, \phi')$--Lie algebroid connection, so $\widehat{D}_{x,i}'(v)\,=\,
v\otimes (\psi')_x^*(\alpha_i^x)$ for all $v\,\in\, E^i_x/E^{i+1}_x$. Thus, using \eqref{eq:composition2} and
\eqref{eq:composition3} we have
$$\widehat{D}_{x,i}(v) \, = \, v\otimes \widehat{\rho}_x((\psi')_x^*(\alpha_i^x)) \, = \, v\otimes \psi_x^*(\alpha_i^x)$$
for each $x\,\in\, S$. Consequently, $D$ is a parabolic $(V,\, \phi)$--Lie algebroid connection.
\end{proof}

Recall that the parabolic degree of a parabolic vector bundle $$E_*\ =\
(E,\, \{\{E^i_x\}_{i=1}^{\ell_x}\}_{x\in S},\, \{\{\alpha^x_i\}_{i=1}^{\ell_x}\}_{x\in S})$$ is defined to be
$${\rm pardeg}(E_*)\ :=\ \deg(E) + \sum_{x\in S} \sum_{i=1}^{l_x} \alpha_i^x (\dim E^i_x - \dim E^{i+1}_x).$$
\begin{theorem}\label{thm2}
Let $(V,\, \phi)$ be a holomorphic Lie algebroid on $X$ such that $\phi_x\, =\, 0$ for all
$x\, \in\, S$ (as in \eqref{e8}), and $(V,\,\phi)$ is logarithmically split (see Definition
\ref{def-1}). Then a parabolic vector bundle $E_*$ on $X$ admits a parabolic Lie algebroid connection
for $(V,\, \phi)$ if and only if the parabolic degree of every indecomposable component of $E_*$ is zero.
\end{theorem}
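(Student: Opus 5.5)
The plan is to use Lemma \ref{lemma:composition} in both directions, with the two Lie algebroid maps $\sigma\,:\,{\mathbb T}\,\longrightarrow\, V$ and $\phi_0\,:\,V\,\longrightarrow\,{\mathbb T}$. This reduces the statement to the classical case $({\mathbb T},\,\iota)$, where parabolic Lie algebroid connections are exactly the usual logarithmic parabolic connections. For that classical case, the known criterion says a parabolic bundle admits a parabolic connection if and only if every indecomposable component has parabolic degree zero.

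I would rather not quote that criterion; it can be derived within the paper's framework. Take $E_*$ indecomposable and argue as in the proof of Theorem \ref{thm1}. By Lemma \ref{lem0} and the filtration argument, $\zeta_0 = \beta\cdot{\rm trace}$ with $\beta = \zeta_0({\rm Id}_{E_*})/{\rm rank}(E)$. So $\zeta_0$ vanishes if and only if $\zeta_0({\rm Id}_{E_*})=0$. The main computation is to identify $\zeta_0({\rm Id}_{E_*})$ with a nonzero constant multiple of ${\rm pardeg}(E_*)$.

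For that identification I would use that $\zeta_0({\rm Id})$ is the pushforward of the Atiyah class under trace. Taking trace of the sequence \eqref{k1} (mapping ${\rm End}_n(E_*)\otimes K_X(S)\to K_X(S)$) gives an extension of ${\mathcal O}_X$ by $K_X(S)$. This extension is governed by local logarithmic connections on $\det E$ whose residue at $x$ is $\sum_i\alpha^x_i(\dim E^i_x-\dim E^{i+1}_x)$. The residue theorem then shows that its class in $H^1(K_X(S))$ maps to $\deg E+\sum$ residues under $H^1(K_X(S))\to H^1(K_X)\cong\mathbb C$. This means $\zeta_0({\rm Id})=c\cdot{\rm pardeg}(E_*)$ with $c\neq0$. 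I expect this to be the main obstacle: checking carefully that the Serre-duality pairing with ${\rm Id}$ is this trace/residue computation, including the residue contribution from the weights.

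For the \emph{if} direction, write $E_*=\bigoplus_j V^j_*$. Each $V^j_*$ has parabolic degree zero, so the step above gives it a parabolic $({\mathbb T},\iota)$-connection $D'_j$. Their direct sum $D'$ is a parabolic $({\mathbb T},\iota)$-connection on $E_*$. The anchor $\phi_0$ is a Lie algebroid map $(V,\phi)\to({\mathbb T},\iota)$, since $\phi$ preserves brackets and $\iota$ is injective. Applying Lemma \ref{lemma:composition} with $\rho=\phi_0$ turns $({\rm Id}\otimes\phi_0^*)\circ D'$ into a parabolic $(V,\phi)$-connection; this map is exactly $\Phi$ in \eqref{k2}.

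For the \emph{only if} direction, let $D$ be a parabolic $(V,\phi)$-connection on $E_*$. I would not use $\sigma$ through Lemma \ref{lemma:composition}, since $\sigma$ need not be a Lie algebroid map; note that the lemma's computation only uses $\phi'\circ\rho=\phi$ together with the $\psi_x$ compatibility. Since $\phi_0\circ\sigma={\rm Id}$, the operator $({\rm Id}\otimes\sigma^*)\circ D$ satisfies the ${\mathbb T}$-Leibniz rule. It is also parabolic, by the computation in that lemma and the relation $\psi^{\mathbb T}_x=\widehat\sigma_x\circ\psi_x$. The obstruction-class language gives the same conclusion: $\zeta_0=(\mathrm{Id}\otimes\sigma^*)_*\zeta=0$. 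The result is a parabolic connection on $E_*$. Its restriction to each indecomposable summand, by projecting with the summand idempotents, is again a parabolic connection. The residue computation above then forces each summand's parabolic degree to be zero.
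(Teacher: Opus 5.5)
Your reduction is the paper's. For the ``if'' direction the paper also composes a parabolic $({\mathbb T},\,\iota)$-connection $D_0$ with $\phi_0^*$ and applies Lemma \ref{lemma:composition}. For the ``only if'' direction it also uses $({\rm Id}_E\otimes\sigma^*)\circ D$. Your observation that the proof of Lemma \ref{lemma:composition} only uses $\phi'\circ\rho=\phi$ and $\psi_x=\widehat\rho_x\circ\psi'_x$ is useful: it means the lemma applies to $\sigma$, which need not respect brackets, because $\phi\circ\sigma=\iota$. The paper merely asserts this step.

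The real difference is that the paper settles the classical case $({\mathbb T},\,\iota)$ in both directions by citing \cite[Theorem 1.1]{BL}. You instead re-derive it from the identity $\zeta_0=\beta\cdot{\rm trace}$ in the proof of Theorem \ref{thm1}. That route is legitimate and self-contained. It also has a nice by-product: $\beta$ becomes a nonzero multiple of ${\rm pardeg}(E_*)/{\rm rank}(E)$, so $\zeta=\beta\cdot(\phi_0)_*\circ{\rm trace}$ gives Corollary \ref{cor2} for indecomposable $E_*$ in one formula.

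However, the cohomological step of your re-derivation is wrong as written. If you push \eqref{k1} forward by trace into $K_X\otimes{\mathcal O}_X(S)$, the class lies in $H^1(X,\,K_X\otimes{\mathcal O}_X(S))$. That group is dual to $H^0(X,\,{\mathcal O}_X(-S))=0$, since $S\neq 0$. There is also no map from it to $H^1(X,\,K_X)$; the inclusion $K_X\hookrightarrow K_X\otimes{\mathcal O}_X(S)$ goes the other way. That extension is the sheaf of logarithmic connections on $\det E$ with unconstrained residues, which always exist, so it carries no information.

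The fix is that ${\rm trace}({\rm End}_n(E_*))\subset{\mathcal O}_X(-S)$, so trace maps ${\rm End}_n(E_*)\otimes K_X\otimes{\mathcal O}_X(S)$ into $K_X$. Set $a_x:=\sum_i\alpha^x_i\dim(E^i_x/E^{i+1}_x)$. Via $D\mapsto\det D$, the pushout $0\to K_X\to{\mathcal C}'\to{\mathcal O}_X\to 0$ is the sheaf of generalized logarithmic connections on $\det E$ of weight $w$ with residue $w(x)a_x$ at each $x\in S$. Its class ${\rm trace}_*(\zeta_0)\in H^1(X,\,K_X)\cong{\mathbb C}$ is $\zeta_0({\rm Id}_{E_*})$ by the very definition of the duality \eqref{k4}. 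So that part of the step you flagged is tautological.

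The genuine input is that this class equals $c\,(\deg E+\sum_{x\in S}a_x)$ with $c\neq 0$:
\begin{enumerate}
\item Since $H^1(X,\,K_X\otimes{\mathcal O}_X(S))=0$, $\det E$ has some logarithmic connection, with residues $b_x$ say. The residue theorem gives $\deg E+\sum_x b_x=0$.
\item The class of ${\mathcal C}'$ is the image of $(a_x-b_x)_{x\in S}$ under the connecting map of $0\to K_X\to K_X\otimes{\mathcal O}_X(S)\to\bigoplus_{x\in S}{\mathbb C}_x\to 0$.
\item That connecting map is onto $H^1(X,\,K_X)$. Its kernel is the image of the residue map on $H^0(X,\,K_X\otimes{\mathcal O}_X(S))$, which by the residue theorem and a dimension count is $\{\sum_x r_x=0\}$. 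Hence it is $(r_x)\mapsto c\sum_x r_x$.
\end{enumerate}

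With this correction, or simply by citing \cite{BL} as the paper does, your argument is complete. For the ``only if'' direction you could also bypass the Atiyah class entirely: apply the residue theorem to the determinant of the compressed parabolic connection on each summand.
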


\begin{proof}
First assume that the parabolic degree of every indecomposable component of $E_*$ is zero.
Then $E_*$ admits a parabolic connection
$$
D_0\ :\ E\, \longrightarrow\ E\otimes K_X\otimes {\mathcal O}_X(S)
$$
(see \cite[p.~594, Theorem 1.1]{BL}), in other words, $D_0$ is a parabolic Lie algebroid connection on
$E_*$ for the holomorphic Lie algebroid $(TX\otimes {\mathcal O}_X(-S), \,\iota)$ (see \eqref{io}). Now
$$
({\rm Id}_E\otimes \phi^*_0)\circ D_0\ :\ E\, \longrightarrow\ E\otimes V^*,
$$
where $\phi^*_0$ is the dual of the homomorphism $\phi_0$ in \eqref{e9}, is a
parabolic Lie algebroid connection on $E_*$ for $(V,\, \phi)$ by Lemma \ref{lemma:composition}.

To prove the converse, assume that $E_*$ admits a parabolic Lie algebroid connection
$$
D\ :\ E\ \longrightarrow\ E\otimes V^*
$$
for $(V,\, \phi)$. Since $(V,\,\phi)$ is a logarithmically split holomorphic Lie algebroid, there is
a homomorphism
$$\sigma\ :\ TX\otimes{\mathcal O}_X(-S)\ \longrightarrow\ V$$
such that $\phi_0\circ\sigma\,=\, {\rm Id}_{TX\otimes{\mathcal O}_X(-S)}$, 
where $\phi_0$ is the homomorphism in \eqref{e9} (see \eqref{s}). Now
$$
({\rm Id}_E\otimes \sigma^*)\circ D \ :\ E\, \longrightarrow\ E\otimes K_X\otimes {\mathcal O}_X(S)
$$
is a parabolic connection on $E_*$, in other words, $({\rm Id}_E\otimes \sigma^*)\circ D$
is a Lie algebroid connection on
$E_*$ for the holomorphic Lie algebroid $(TX\otimes {\mathcal O}_X(-S), \,\iota)$.
This implies that the parabolic degree of every indecomposable component of $E_*$ is zero
\cite[p.~594, Theorem 1.1]{BL}.
\end{proof}

Combining Corollary \ref{cor1} and Theorem \ref{thm2} we have the following:

\begin{corollary}\label{cor2}
Let $(V,\, \phi)$ be a holomorphic Lie algebroid on $X$ satisfying the condition 
$\phi_x\, =\, 0$ for all $x\, \in\, S$.
A parabolic vector bundle $E_*$ on $X$ admits a parabolic Lie algebroid connection
for $(V,\, \phi)$ if and only if at least one of the following two conditions holds:
\begin{enumerate}
\item The holomorphic Lie algebroid $(V,\, \phi)$ is logarithmically non-split.

\item The parabolic degree of every indecomposable component of $E_*$ is zero.
\end{enumerate}
\end{corollary}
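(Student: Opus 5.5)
The plan is to deduce Corollary \ref{cor2} directly from Corollary \ref{cor1} and Theorem \ref{thm2}, by a case split on whether $(V,\,\phi)$ is logarithmically split. Definition \ref{def-1} makes this dichotomy exhaustive, and both results assume $\phi_x\,=\,0$ for all $x\,\in\, S$, which is exactly the standing hypothesis here.

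\emph{Sufficiency.} Suppose one of the two conditions holds.
\begin{enumerate}
\item If $(V,\,\phi)$ is logarithmically non-split, then Corollary \ref{cor1} gives a parabolic Lie algebroid connection on $E_*$ for $(V,\,\phi)$. No hypothesis on $E_*$ is needed.
\item If condition (2) holds but $(V,\,\phi)$ is logarithmically split, then the ``if'' direction of Theorem \ref{thm2} gives the connection. This direction composes a parabolic connection, which exists by \cite{BL}, with $\phi^*_0$, and uses Lemma \ref{lemma:composition}.
\end{enumerate}

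\emph{Necessity.} Suppose $E_*$ admits a parabolic Lie algebroid connection for $(V,\,\phi)$, and suppose condition (1) fails, so $(V,\,\phi)$ is logarithmically split. The ``only if'' direction of Theorem \ref{thm2} then shows that every indecomposable component of $E_*$ has parabolic degree zero. That is condition (2).

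There is no real obstacle, since all the analytic content is already in Theorem \ref{thm1} (the vanishing of $\zeta$ by Serre duality and Lemma \ref{lem0}) and in Theorem \ref{thm2}. The only point to confirm is that the phrase ``indecomposable component'' is well defined. The decomposition of $E_*$ into indecomposable parabolic bundles exists by induction on rank, but we do not need its uniqueness. Theorem \ref{thm2} works through the parabolic connection $({\rm Id}_E\otimes\sigma^*)\circ D$ and the criterion of \cite{BL}, and that criterion is a statement for any decomposition, or equivalently one invariant under Krull--Schmidt.
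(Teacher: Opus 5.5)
Your proposal is correct and matches the paper, which obtains this corollary by combining Corollary \ref{cor1} for the logarithmically non-split case with both directions of Theorem \ref{thm2} for the logarithmically split case. The dichotomy is exhaustive by Definition \ref{def-1}. Your extra remark on why ``indecomposable component'' is well defined is harmless and not needed for the deduction.
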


We will finish by including some comments on the existence of flat parabolic and quasi-parabolic
$(V,\, \phi)$--Lie algebroid connections. Given a holomorphic Lie algebroid connection $D \, : E\, \longrightarrow\, E\otimes V^*$,
and a locally defined holomorphic section $v$ of $V$, define $D_v$ as the locally defined $\mathbb{C}$-linear
endomorphism of $E$ induced by contracting the image of $D$ with $v$. Recall (see \cite{To2,ABKS}) that a
Lie algebroid is called flat or integrable if
$$D_{[v,w]} \ = \ [D_v,\ D_w]$$
for each pair of locally defined holomorphic sections $v,\,w$ of $V$. For any holomorphic
Lie algebroid connection $D$ on $E$, define the curvature of $D$ as the homomorphism
$F_D\,\in\, H^0\left(X,\, {\rm End}(E) \otimes \wedge^2 V^*\right )\, =\,
H^0\left ({\rm Hom} ( \wedge^2 V , \, {\rm End}(E))\right)$ which maps each local section $v\wedge w$ of
$\wedge^2V$ to
$$F_D(v\wedge w) \ =\ [D_v,\ D_w] - D_{[v,w]}.$$
In particular, $D$ is flat if and only if $F_D\, =\, 0$.

\begin{remark}\label{rmk:integrableRk1}
If $V$ is a line bundle, then $F_D\,=\,0$ for every $D$, since $\wedge^2V\, = \, 0$. Thus, if $V$ is a
line bundle, then every $(V,\, \phi)$--Lie algebroid connection is flat.
\end{remark}

\begin{lemma}
\label{lemma:composition2}
Let $(V,\, \phi)$ and $(V',\, \phi')$ be holomorphic Lie algebroids on $X$ such that $\phi_x\,=\,0$ and
$\phi'_x\,=\,0$ for all $x\,\in\, S$, and let $\rho\, :\, V\,\longrightarrow\, V'$ be a
holomorphic Lie algebroid map. Let $E_*$ be a parabolic vector bundle and $D'\, : \, E\,\longrightarrow\,
E\otimes (V')^*$ a flat parabolic $(V',\, \phi')$--Lie algebroid connection on $E_*$. Then
$D\, =\, ({\rm Id}_E\otimes \rho^*) \circ D'\, : \, E\, \longrightarrow\, E\otimes (V)^*$ is a flat parabolic
$(V,\, \phi)$--Lie algebroid connection on $E_*$.
\end{lemma}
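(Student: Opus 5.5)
By Lemma \ref{lemma:composition}, $D\,=\,({\rm Id}_E\otimes \rho^*)\circ D'$ is already a parabolic $(V,\,\phi)$--Lie algebroid connection on $E_*$. What remains is flatness, i.e.\ $F_D\,=\,0$. The plan is to show that the curvature of $D$ is the pullback of the curvature of $D'$ along $\rho$:
$$
F_D(v\wedge w)\ =\ F_{D'}(\rho(v)\wedge \rho(w))
$$
for all locally defined holomorphic sections $v,\,w$ of $V$.

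First I identify the contraction of $D$ with a section of $V$. For a local section $v$ of $V$ and a local section $s$ of $E$, contracting $({\rm Id}_E\otimes\rho^*)(D'(s))$ with $v$ is the same as contracting $D'(s)$ with $\rho(v)$. This is just the definition of the dual map $\rho^*$, and it gives
$$
D_v\ =\ D'_{\rho(v)}
$$
as locally defined $\mathbb C$--linear endomorphisms of $E$. This is the identity already used in the first display of the proof of Lemma \ref{lemma:composition}.

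Next I use that $\rho$ is a Lie algebroid map, so $\rho([v,\,w])\,=\,[\rho(v),\,\rho(w)]$ for local sections $v,\,w$ of $V$; compatibility with anchors, $\phi'\circ\rho\,=\,\phi$, was already used in Lemma \ref{lemma:composition}. Then
$$
F_D(v\wedge w)\ =\ [D_v,\,D_w]-D_{[v,w]}\ =\ [D'_{\rho(v)},\,D'_{\rho(w)}]-D'_{[\rho(v),\rho(w)]}\ =\ F_{D'}(\rho(v)\wedge\rho(w)).
$$
Since $D'$ is flat, $F_{D'}\,=\,0$, so $F_D\,=\,0$ and $D$ is flat. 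Combined with Lemma \ref{lemma:composition}, this proves the statement.

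There is no genuine obstacle. The one point to handle carefully is bracket compatibility $\rho([v,w])\,=\,[\rho(v),\rho(w)]$, which is exactly what ``Lie algebroid map'' means. A mere vector bundle homomorphism intertwining the anchors would not suffice; this is why the hypothesis on $\rho$ is stated the way it is.
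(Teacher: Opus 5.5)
Your proof is correct and follows essentially the same route as the paper: you invoke Lemma~\ref{lemma:composition} for the parabolic property, then use $D_v = D'_{\rho(v)}$ together with $\rho([v,w]) = [\rho(v),\rho(w)]$ to conclude $[D_v, D_w] = D_{[v,w]}$. Writing this as $F_D(v\wedge w) = F_{D'}(\rho(v)\wedge\rho(w))$ is just a repackaging of the paper's chain of equalities.
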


\begin{proof}
By Lemma \ref{lemma:composition}, $D$ is a parabolic $(V,\, \phi)$--Lie algebroid connection. Let $v$ and $w$ be
locally defined holomorphic sections of $V$. Since $\rho$ preserves the Lie bracket operations
of $V$ and $V'$, we have
$$[D_v,\, D_w]\ =\ [D'_{\rho(v)}, \, D'_{\rho(w)}] \ =\ D'_{[\rho(v), \rho(w)]} \ = \ D'_{\rho([v,w])} \
 = \ D_{[v,w]}.$$
Thus, $D$ is flat.
\end{proof}

\begin{corollary} \label{cor:integrable}
Let $(V,\, \phi)$ be a holomorphic Lie algebroid such that $\phi_x\,=\,0$ for all $x\,\in\, S$. Let $E_*$
be a parabolic vector bundle. Then the following three statements hold.
\begin{enumerate}
\item $E_*$ always admits a flat quasi-parabolic $(V,\, \phi)$--Lie algebroid connection.

\item If ${\rm image}(\phi)\, \neq \, TX\otimes \mathcal{O}_X(-S)$ or ${\rm image}(\phi)\, = \,
TX\otimes \mathcal{O}_X(-S)$, and the parabolic degree of every indecomposable component of $E_*$ is zero, then
$E_*$ admits a flat parabolic $(V,\, \phi)$--Lie algebroid connection.

\item If ${\rm image}(\phi)\, = \, TX\otimes \mathcal{O}_X(-S)$ and $(V,\, \phi)$ is logarithmically split, then
$E_*$ admits a flat parabolic $(V,\, \phi)$--Lie algebroid connection if and only if the parabolic degree of
every indecomposable component of $E_*$ is zero.
\end{enumerate}
\end{corollary}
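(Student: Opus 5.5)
The plan is to reduce every statement to the rank one Lie algebroid given by the image of the anchor, and then pull back along $\phi$. Remark \ref{rmk:integrableRk1} makes connections for a rank one algebroid automatically flat, and Lemma \ref{lemma:composition2} transports flatness and parabolicity back to $(V,\,\phi)$.

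If $\phi\,=\,0$, then $D\,=\,0$ is a $(V,\,\phi)$--Lie algebroid connection; it is flat and preserves every ${\mathcal E}_{x,i}$, so it is quasi-parabolic. Its second fundamental forms vanish, which matches Definition \ref{def:paraLieconn} because $\psi_x\,=\,0$ (as $\phi^*_0\,=\,0$). This handles (1) and (2) in that case, so assume $\phi\,\neq\,0$. Since $\phi_0$ is a nonzero map to a line bundle, its image is a line subsheaf
$$L\,=\,TX\otimes{\mathcal O}_X(-S-Z)\ \subset\ TX\otimes{\mathcal O}_X(-S)$$
for some effective divisor $Z\,\geq\,0$. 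Being a subsheaf of vector fields closed under Lie bracket, $(L,\,\iota_L)$ is a holomorphic Lie algebroid, with $\iota_L$ vanishing on $S$. The anchor factors as $V\,\xrightarrow{\ \rho\ }\, L\,\hookrightarrow\, TX$, and $\rho$ is a Lie algebroid map because the bracket on $L$ is that of vector fields and $\phi$ is a Lie algebroid map.

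For (2), first suppose ${\rm image}(\phi)\,\neq\,TX\otimes{\mathcal O}_X(-S)$, so $Z\,\neq\,0$. Then $(L,\,\iota_L)$ is logarithmically non-split. Indeed, its map $(\iota_L)_0\,:\,L\,\to\,TX\otimes{\mathcal O}_X(-S)$ vanishes at the points of $Z$, so no $\sigma$ with $(\iota_L)_0\circ\sigma\,=\,{\rm Id}$ can exist. By Corollary \ref{cor1}, $E_*$ admits a parabolic $(L,\,\iota_L)$--connection. It is flat by Remark \ref{rmk:integrableRk1}, and Lemma \ref{lemma:composition2} applied to $\rho$ gives a flat parabolic $(V,\,\phi)$--connection. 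If instead $L\,=\,TX\otimes{\mathcal O}_X(-S)$ and every indecomposable component has parabolic degree zero, then \cite[Theorem 1.1]{BL} supplies a parabolic connection. That connection is flat by the same remark, and we pull it back in the same way.

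For (3), sufficiency is (2.b). For necessity, take a flat parabolic $(V,\,\phi)$--connection; in particular it is parabolic, so Theorem \ref{thm2} forces every indecomposable component to have parabolic degree zero.

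For (1), the only remaining case is $L\,=\,TX\otimes{\mathcal O}_X(-S)$ with arbitrary $E_*$. Here I would exploit the fact that quasi-parabolicity ignores the weights. Decompose $E_*\,=\,\bigoplus_j V^j_*$ into indecomposables. This decomposition depends only on the filtrations, since by Lemma \ref{lem0} indecomposability only involves ${\rm End}_P(E_*)$. On each $V^j$, replace the weights by arbitrary complex numbers $\alpha^x_i$ (not necessarily in $[0,1)$) chosen so that $\deg(V^j)+\sum_{x,i}\alpha^x_i(\dim E^i_x-\dim E^{i+1}_x)\,=\,0$. For example, take all weights on $V^j$ equal to $-\deg(V^j)/({\rm rank}(V^j)\cdot |S|)$.

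The key point is that the construction of Lemma \ref{lem4}, the computation in the proof of Theorem \ref{thm1} showing $\zeta_0\,=\,\beta\cdot{\rm trace}$ for indecomposable $E_*$, and the identification of $\beta$ as a nonzero multiple of this generalized parabolic degree (the residue-theorem argument behind \cite{BL}) never use that the weights lie in $[0,1)$ or are strictly increasing. So $\zeta_0\,=\,0$ for each $V^j$ with the new weights, and the resulting logarithmic connections are quasi-parabolic. Their direct sum is a quasi-parabolic connection for $(TX\otimes{\mathcal O}_X(-S),\,\iota)$, flat by Remark \ref{rmk:integrableRk1}. Pulling it back by $\rho$ gives a flat quasi-parabolic $(V,\,\phi)$--connection; the first half of the proof of Lemma \ref{lemma:composition} shows that quasi-parabolicity is preserved.

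The main obstacle is this last step: justifying that the obstruction $\zeta_0(\mathrm{Id})$ equals a nonzero multiple of the generalized parabolic degree for arbitrary complex weights. I would verify it directly by computing the trace of the second fundamental forms through the residue theorem, that is, comparing $\deg E$ with the sum of residues $\sum\alpha^x_i\dim(E^i_x/E^{i+1}_x)$.
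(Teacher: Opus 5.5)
Your proposal is correct and follows essentially the same route as the paper. Like the paper, you reduce to the line bundle $L={\rm image}(\phi)$, use Corollary \ref{cor1} or \cite{BL} together with Remark \ref{rmk:integrableRk1} and Lemma \ref{lemma:composition2} for (2), use Theorem \ref{thm2} for (3), and for (1) replace the weights on each indecomposable summand by weights of generalized parabolic degree zero. The only difference is in (1): you justify the arbitrary-weight version of \cite{BL} internally, via $\zeta_0=\beta\cdot{\rm trace}$ from the proof of Theorem \ref{thm1} and the residue theorem applied to $\det E$, whereas the paper simply asserts that the arguments of \cite{BL} carry over.
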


\begin{proof}
If $\phi\, =\, 0$, then $D\,=\,0$ is a flat parabolic $(V,\, 0)$-Lie algebroid connection on $E_*$. Suppose
that $\phi\, \ne \, 0$. Set $L\,=\,{\rm image}(\phi)\, \subseteq \, TX\otimes \mathcal{O}_X(-S)$. Since
$\phi$ is a Lie algebroid map (see, for instance, \cite[Remark 2.1]{ABKS}), it follows that $L$ is a line bundle
inheriting a holomorphic Lie algebroid structure from $TX$, with the inclusion map $\iota\, :\, L\,
\hookrightarrow\, TX$ being the anchor map. Observe that since $\phi_x\,=\,0$ for all $x\,\in \,S$,
we have $\iota_x\,=\,0$ for all $x\,\in\, S$. Also, let us call $\phi_L\, : \, V\longrightarrow L$ the Lie
algebroid map induced by $\phi$.

If $L \, \ne \, TX\otimes \mathcal{O}_X(-S)$, then $(L,\, \iota)$ is not logarithmically split. Thus, by
Corollary \ref{cor1}, $E_*$ admits a parabolic $(L,\, \iota)$--Lie algebroid connection
$D'\, : \, E \,\longrightarrow\, E\otimes L^*$. Since $L$ is a line bundle, by Remark \ref{rmk:integrableRk1},
this $D'$ is flat. Define
\begin{equation}
\label{eq:integrable1}
D \ :=\ ({\rm Id}_E \otimes \phi_L^*) \circ D' \ : \ E\ \longrightarrow\ E \otimes V^*.
\end{equation}
By Lemma \ref{lemma:composition2}, $D$ is a flat parabolic $(V,\, \phi)$--Lie algebroid connection.

Suppose now that $L\,=\,TX\otimes \mathcal{O}_X(-S)$. If the parabolic degree of every indecomposable component 
of $E_*$ is zero, then $E_*$ admits a parabolic $(L,\, \iota)$--Lie algebroid 
connection $D'$ (see \cite[p.~594, Theorem 1.1]{BL}), and repeating the previous argument, the map $D$
from \eqref{eq:integrable1} yields --- once again --- a flat 
parabolic $(V,\, \phi)$--Lie algebroid connection on $E_*$. This proves statement (2).
Statement (3) is a consequence of (2) and Theorem \ref{thm2}.

Let us now prove statement (1). We can assume without loss of generality that $L\,=\,
TX\otimes \mathcal{O}_X(-S)$ and that $E_*$ is indecomposable, as it is enough to build a quasi-parabolic
connection on each indecomposable component of $E_*$ in order to build one on $E_*$. Choose real numbers
$\beta_i^x$ for each $x\in S$ and $1\,\le\, i\,\le\, l_x$ such that
\begin{equation}
\label{eq:integrable2}
\deg(E) + \sum_{x\in S} \sum_{i=1}^{l_x} \beta_i^x (\dim E^i_x - \dim E^{i+1}_x) \ = \ 0.
\end{equation}
Let $E'_*$ denote the quasi-parabolic vector bundle underlying $E_*$ endowed with the
new system of weights $\beta$. Even
though parabolic systems of weights are usually taken to be lying between $0$ and $1$, the arguments in \cite{BL}
remain valid for any arbitrary system of weights, as long as \eqref{eq:integrable2} is satisfied. In
particular, from \cite[p.~594, Theorem 1.1]{BL}, the parabolic vector bundle $E_*'$ admits a quasi-parabolic
$(TX\otimes \mathcal{O}_X(-S),\, \iota)$--Lie algebroid connection $D'$, whose residues at the parabolic
points act diagonally as multiplication by the weights $\beta$. Then, Lemma \ref{lemma:composition2}, implies
that the map $D$ from \eqref{eq:integrable1} is a flat quasi-parabolic connection on $E_*$.
\end{proof}

\section*{Acknowledgements}

D.A. was partially supported by MCIN/AEI/10.13039/501100011033 grant PID2022-142024NB-I00. I.B. is partially 
supported by J. C. Bose Fellowship (JBR/2023/000003). A.S. is partially supported by ANRF/ARGM/2025/000670/MTR.


\begin{thebibliography}{ZZZZZZ}

\bibitem[Al]{Al} D. Alfaya, Moduli space of parabolic $\Lambda$-modules over a curve, 
arXiv:1710.02080.

\bibitem[AO]{AO} D. Alfaya and A. Oliveira, Lie algebroid connections, twisted Higgs bundles
and motives of moduli spaces, {\it J. Geom. Phys.} {\bf 201} (2024), 105195.

\bibitem[ABKS]{ABKS} D. Alfaya, I. Biswas, P. Kumar and A. Singh, Parabolic vector bundles and Lie
algebroid connections, \textit{Canadian Jour. Math.}, https://doi.org/10.4153/S0008414X25101983.

\bibitem[At]{At} M. F. Atiyah, Complex analytic connections in fibre bundles, {\it
Trans. Amer. Math. Soc.} {\bf 85} (1957), 181--207.

\bibitem[BL]{BL} I. Biswas and M. Logares, Connections on parabolic vector bundles over curves, {\it 
Internat. J. Math.} {\bf 22} {(4)} (2011) 593--602.

\bibitem[CM]{CM} J. Cort{\'{e}}s and E. Mart{\'{\i}}nez,
Mechanical control systems on Lie algebroids,
{\em {IMA} Jour. Math. Control Information} {\bf 21} (2004), 457--492.

\bibitem[De]{De} P.~Deligne, {\em \'{E}quations diff\'{e}rentielles \`a points singuliers
r\'{e}guliers}, Lecture Notes in Mathematics, Vol. 163. Springer-Verlag, Berlin-New York, 1970.

\bibitem[ELW]{ELW} S. Evens, J.-H. Lu and A. Weinstein, Transverse measures, the modular
class and a cohomology pairing for Lie algebroids, {\it Quart. Jour. Math.} {\bf 50}
(1999), 417--436.

\bibitem[GH]{GH} P. Griffiths and J. Harris, {\it Principles of algebraic geometry},
Pure and Applied Mathematics, Wiley-Interscience, New York, 1978.

\bibitem[LM]{LM} S. Lazzarini and T. Masson, Connections on Lie algebroids and on
derivation-based noncommutative geometry, {\em Jour. Geom. Phy.} {\bf 62} (2012), 387--402.

\bibitem[MY]{MY} M. Maruyama and K. Yokogawa, Moduli of parabolic stable
sheaves, \textit{Math. Ann.} \textbf{293} (1992) 77--99.

\bibitem[MS]{MS} V. B. Mehta and C. S. Seshadri, Moduli of vector bundles on curves with
parabolic structures, {\it Math. Ann.} \textbf{248} (1980), 205--239.

\bibitem[PW]{PW} \'{A}. del Pino and A. Witte, Regularisation of Lie algebroids
and applications, {\it Jour. Geom. Phys.} {\bf 194} (2023), 105023.

\bibitem[To1]{To1} P. Tortella, {\it {$\Lambda$}-modules and holomorphic Lie algebroids}, PhD thesis,
Scuola Internazionale Superiore di Studi Avanzati (2011).

\bibitem[To2]{To2} P. Tortella, $\Lambda$-modules and holomorphic Lie algebroid connections,
{\it Cent. Eur. J. Math.} {\bf 10} (2012), 1422--1441.

\bibitem[Yok]{Yok} K. Yokogawa, Infinitesimal deformation of parabolic Higgs sheaves,
\textit{Internat. J. Math.} \textbf{6} (1995) 125--148.

\end{thebibliography}
\end{document}